\newtheorem{thm}{Theorem}[section]
\newtheorem{cor}[thm]{Corollary}
\newtheorem{lem}[thm]{Lemma}
\newtheorem{prop}[thm]{Proposition}
\theoremstyle{mydefinition}
\theoremstyle{myremark}
\newtheorem{rem}[thm]{Remark}
\newtheorem{exa}[thm]{Example}
\newtheorem{Collection}[thm]{Collection}
\newtheorem{prob}[thm]{Open Problem}
\title[Frobenius Number and Genus]{On the Frobenius Number and Genus of a Collection of Semigroups Generalizing Repunit Numerical Semigroups}
\author{Feihu Liu$^{1}$, Guoce Xin$^{2, *}$, Suting Ye$^{3}$ and Jingjing Yin$^{4}$}
\address{$^{1, 2, 3, 4}$School of Mathematical Sciences,  Capital Normal University,
 Beijing 100048,  PR China}
\email{$^1$\texttt{liufeihu7476@163.com}\ \& $^2$\texttt{guoce\_xin@163.com}\ \& $^3$\texttt{yesuting0203@163.com}\ \newline \newline \& $^4$\texttt{yinjingj@163.com}}
\date{March 4, 2025}
\thanks{$*$ This work was partially supported by NSFC(12071311).}
\begin{document}
\maketitle

\begin{abstract}
Let $A=(a_1, a_2, \ldots, a_n)$ be a sequence of relative prime positive integers with $a_i\geq 2$. The Frobenius number $F(A)$ is the largest integer not belonging to the numerical semigroup $\langle A\rangle$ generated by $A$. The genus $g(A)$ is the number of positive integer elements not in $\langle A\rangle$. The Frobenius problem is to determine $F(A)$ and $g(A)$ for a given sequence $A$. In this paper, we study the Frobenius problem of $A=\left(a,h_1a+b_1d,h_2a+b_2d,\ldots,h_ka+b_kd\right)$ with some restrictions. An innovation is that $d$ can be a negative integer. In particular, when
$A=\left(a,ba+d,b^2a+\frac{b^2-1}{b-1}d,\ldots,b^ka+\frac{b^k-1}{b-1}d\right)$, we obtain formulas for $F(A)$ and $g(A)$ when $a\geq k-1-\frac{d-1}{b-1}$. Our formulas simplify further for some special cases, such as Mersenne, Thabit, and repunit numerical semigroups. Finally, we partially solve an open problem for the Proth numerical semigroup.
\end{abstract}

\def\D{{\mathcal{D}}}

\noindent
\begin{small}
 \emph{Mathematic subject classification}: Primary 11D07; Secondary 11A67, 11B75, 20M14.
\end{small}

\noindent
\begin{small}
\emph{Keywords}: Numerical semigroup; Ap\'ery set; Frobenius number; Genus; Pseudo-Frobenius number.
\end{small}

\section{Introduction}

Throughout this paper, $\mathbb{Z}$, $\mathbb{N}$, and $\mathbb{P}$ denote the set of all integers, non-negative integers, and positive integers, respectively. We need to introduce some basic definitions (for instance, see \cite{A.Assi,J.C.Rosales}).

A subset $S$ is a \emph{submonoid} of $\mathbb{N}$ if $S\subseteq \mathbb{N}$, $0\in S$, and $S$ is closed under the addition in $\mathbb{N}$. If $\mathbb{N}\setminus S$ is finite, we say that $S$ is a \emph{numerical semigroup}. For a sequence (or set) $A=(a_1,a_2,\ldots,a_n)$ with $a_i \in\mathbb{P}$, denote by $\langle A\rangle$ the submonoid of $\mathbb{N}$ generated by $A$, that is,
$$\langle A\rangle=\left\{\sum_{i=1}^n x_ia_i \mid n\in \mathbb{P}, x_i\in \mathbb{N}, 1\leq i\leq n \right\}.$$
In \cite[Lemma 2.1]{J.C.Rosales}, we can see that $\langle A\rangle$ is a numerical semigroup if and only if $\gcd(A)=1$.
If $\langle A\rangle$ is a numerical semigroup, then we say that $A$ is a \emph{system of generators} of $\langle A\rangle$. If no proper subsequence (or subset) of $A$ generates $\langle A\rangle$, then we say that $A$ is a \emph{minimal system of generators} of $\langle A\rangle$, and the cardinality of $A$ is the embedding dimension of $\langle A\rangle$, denoted by $e(A)$. Moreover, the minimal system of generators is finite and unique (see \cite{J.C.Rosales}).

If $\gcd(A)=1$, then we have the following definitions:
\begin{enumerate}
  \item \emph{The Frobenius number} $F(A)$: The greatest integer not belonging to $\langle A\rangle$.

  \item \emph{The genus (or Sylvester number)} $g(A)$: The cardinality of $\mathbb{N}\backslash \langle A\rangle$.

  \item \emph{The pseudo-Frobenius numbers} $u$: If $u \in \mathbb{Z}\backslash \langle A\rangle$ and $u+s\in \langle A\rangle$ for all $s \in \langle A\rangle\backslash \{0\}$.

  \item The set $PF(A)$: The set of pseudo-Frobenius numbers of $\langle A\rangle$ (see \cite{Rosales2002}).

  \item \emph{The type} $t(A)$ of $\langle A\rangle$: The cardinality of $PF(A)$.
\end{enumerate}

The Frobenius number $F(A)$ has been widely studied. For $A=(a_1,a_2)$, Sylvester \cite{J. J. Sylvester1} obtained $F(A)=a_1a_2-a_1-a_2$ in 1882. For $n\geq 3$, Curtis \cite{F.Curtis} proved that the $F(A)$ cannot be given by closed formulas of a certain type. However, many special cases have been studied, such as arithmetic progression \cite{A. Brauer,Roberts1,E. S. Selmer}, geometric sequences \cite{D.C.Ong}, and triangular and tetrahedral sequences \cite{AMRobles}.
For more special sequences, see \cite{Ramrez Alfonsn,A. Tripathi1,A. Tripathi3,Liu-Xin,Fliuxin22,Rodseth}. Many special numerical semigroups are also considered, such as Fibonacci \cite{J.M.Marin}, Mersenne \cite{Rosales2016}, repunit \cite{Rosales.Repunit}, squares and cubes \cite{M.Lepilov}, Thabit \cite{Rosales2015}, and other numerical semigroups \cite{GuZe2020,KyunghwanSong2020,KyunghwanSong}.

The motivation of this paper comes from the repunit numerical semigroup: $\langle\{ \frac{b^{n+i}-1}{b-1}\mid i\in \mathbb{N}\}\rangle$. Its minimal system of generators is $S(b,n)=\left(\frac{b^{n}-1}{b-1}, \frac{b^{n+1}-1}{b-1},\ldots, \frac{b^{2n-1}-1}{b-1}\right)$ (see \cite{Rosales.Repunit}). The embedding dimension $e(S(b,n))$ is $n$.
In this paper, we consider the following more general collection of numerical semigroups.
\begin{Collection}\label{e-general-model}
Let $a,b,k\in\mathbb{P}, d\in\mathbb{Z}\backslash \{0\}$ with $a,b,k\geq 2$ and $\gcd(a,d)=1$. Our collection of numerical semigroups, called \emph{Collection \texttt{CNS}}, consists of all  $\langle A\rangle$ with
\begin{equation}
A=(a,Ha+dB)=\left(a,ba+d,b^2a+\frac{b^2-1}{b-1}d,\ldots,b^ka+\frac{b^k-1}{b-1}d\right),
\end{equation}
where we require $b^ia+\frac{b^i-1}{b-1}d>1$ for all $1\leq i\leq k$ if $d$ is a negative integer.
\end{Collection}

Here we have the embedding dimension $e(A)\leq k+1$. At this time, we can observe that
\begin{enumerate}
\item If $a=(2^m-1)\cdot 2^n-1$, $b=2$, $d=1$, and $k=n+m-1$, then $\langle A\rangle$ is a class of numerical semigroups as discussed in \cite{GuZeTang}.

\item If $a=(2^m+1)\cdot 2^n-(2^m-1)$, $b=2$, $d=2^m-1$, and $k\in\{n+1,n+m-1,n+m\}$, then $\langle A\rangle$ is a class of numerical semigroups as discussed in \cite{KyunghwanSong}.

\item If $a=\frac{b^n-1}{b-1}$, $b\geq 2$, $d=1$, and $k=n-1$, then $\langle A\rangle$ is the repunit numerical semigroups $S(b,n)$ in \cite{Rosales.Repunit}. In the case where $b=2$, this family specializes to the class of Mersenne numerical semigroups $S(n)$ considered in \cite{Rosales2016}.

\item If $a=b^{n+1}+\frac{b^n-1}{b-1}$, $b\geq 2$, $d=1$, and $k=n+1$, then $\langle A\rangle$ is a class of numerical semigroups in \cite{GuZe2020}.

\item If $a=(b+1)b^n-1$, $b\geq 2$, $d=b-1$, and $k=n+1$, then $\langle A\rangle$ is the Thabit numerical semigroups $T_{b,1}(n)$ of the first kind base $b$ in \cite{KyunghwanSong2020}. In the case where $b=2$, this family specializes to the class of Thabit numerical semigroups $T(n)$ considered in \cite{Rosales2015}.

\item If $d\in \mathbb{P}$, we do not have a condition on $k$. In the special case when $k=\min\{m-1\mid b^m-1>(b-1)(a-1)\}$, $\langle A\rangle$ specializes to the numerical semigroups $G_{b,d}(a)$ in  \cite{S.Ugolini}.

\item If $a=(2^r+1)2^n+1$, $b=2$, $d=-1$, and $k=n+r$, then $\langle A\rangle$ is a class of Proth numerical semigroups $P_{2^r+1}(n)$ in \cite{P.Srivastava}. In the case where $r=0$, this family specializes to the class of Cunningham numerical semigroups considered in \cite{KyunghwanSong2020}.
\end{enumerate}

The main purpose of this paper is to provide a unified approach to the above $7$ numerical semigroups, which are special cases of Collection \texttt{CNS}. Inspired by Collection \texttt{CNS}, we study the following more general collection.

\begin{Collection}\label{e-general-general-model}
Let $a,k\in\mathbb{P}$, $a,k\geq 2$, $d\in\mathbb{Z}\backslash \{0\}$, $\gcd(a,d)=1$, $b_1=1, b_{i+1}=s_ib_i+1$, $s_i\geq s_{i-1}$ for $1\leq i\leq k-1$ and $h_i=ub_i+1, u\in \mathbb{P}$ for $1\leq i\leq k$. Our general collection, called \emph{Collection \texttt{GCNS}}, consists of numerical semigroups $\langle A\rangle$ with
\begin{equation}
A=(a,Ha+dB)=(a,h_1a+db_1,h_2a+db_2,\ldots,h_ka+db_k),
\end{equation}
where we require $h_ia+db_i>1$ for $1\leq i\leq k$ if $d$ is a negative integer.
\end{Collection}

For this collection, we obtain formulas of $F(A)$ and $g(A)$ in Theorem \ref{a-2ad-22a3d}.
In some specializations, we can further obtain $PF(A)$ and $t(A)$.
This approach also applies to several other new numerical semigroups:
\begin{enumerate}
\item The case $b_i=\frac{b^i-1}{b-1}$, $u=b-1$, $b\geq 2$, $k=n$, $a=\frac{m(b^n-1)}{b-1}$, which reduces to repunit numerical semigroups when $m=1,d=1$;

\item The case $b_i=2^i-1$, $u=1$, $k=n+1, a=3\cdot 2^n-1$, which reduces to Thabit numerical semigroups $T(n)$ when $d=1$;

\item The case $b_i=2^i-1$, $u=1$, $a=m(2^k-1)+2^{k-1}-1$, $m\geq 1$, and $k\geq 3$;

\item The Proth numerical semigroups.
\end{enumerate}
We consider Proth numerical semigroups $P_m(n)$ and (partially) solve the following open problem in Section \ref{SecPorthNS}.

\begin{prob}{\em \cite[Section 7]{P.Srivastava}}\label{OpenQuestProthFP}
Let $m$ be an odd positive integer and $m<2^n$. The Proth numerical semigroup $P_m(n)$ is generated by
$\{m2^{n+i}+1\mid i\in \mathbb{N}\}$. Is there a formula to find the Frobenius number and other invariants of the Proth numerical semigroup $P_m(n)$ for arbitrary $m$?
\end{prob}

To compute $F(A)$ and $g(A)$ for the above Collection \texttt{GCNS}, we introduce a much simpler minimization problem $O_B^H(M)$, which simplifies as follows:
\begin{align*}
O_{B}^{H}(M)=\min\left\{uM+\sum_{i=1}^kx_i \bigm| \sum_{i=1}^k b_ix_i=M, \ x_i\in\mathbb{N}, 1\leq i\leq k\right\}.
\end{align*}
It turns out that $O_B^H(M)$ can be solved by the greedy algorithm. This is based on the fact that $B=\left(1,b_2,\ldots,b_k\right)$ is an orderly sequence, a concept that we will introduce in Section \ref{section2ord}.

The paper is organized as follows.
In Section 2, we provide some necessary lemmas and related results. Then we establish Theorem \ref{a-2ad-22a3d}, which gives formulas of the Frobenius number $F(A)$ and genus $g(A)$ when $A$ belongs to Collection \texttt{GCNS}, $ua+d+k-2\geq \sum_{i=1}^{k-1}s_i$, and $s_i\leq u+1$. As special cases, we obtain formulas for $F(A)$ and $g(A)$ when $\langle A\rangle$ belongs to Collection \texttt{CNS} and $a\ge k-1-\frac{d-1}{b-1}$.
Sections 3 and 4 focus on applications of Theorem \ref{a-2ad-22a3d} for some special $a$ and $k$.
Semigroups of this form are related to Mersenne, Thabit, and repunit numerical semigroups. The formulas simplify in these cases. In Section 5, we mainly study the Proth numerical semigroup and (partially) solve the Open Problem \ref{OpenQuestProthFP}. In Section 6, we will mention other numerical semigroups and consider the Frobenius problem for a new class of numerical semigroups.
Section 7 is a concluding remark.

\section{Collection \texttt{GCNS} with Some Restrictions}\label{section2ord}

\subsection{Crude Formula for Frobenius Numbers}

It is convenient to use the short-hand notation
$A:=(a, B)=(a, b_1, b_2, \ldots, b_k)$. Let $\gcd(A)=1$, $a,b_i\in \mathbb{P}$. The set
$$\langle A\rangle=\left\{ ax+\sum_{i=1}^kb_ix_i\ \mid x, x_i\in \mathbb{N}\right\}$$
is a numerical semigroup.
Now we define the following order relation in $\mathbb{Z}$: $a\preceq_{\langle A\rangle} b$ if $b-a \in \langle A\rangle$. It is proved that this relation $\preceq_{\langle A\rangle}$ is a partial order in \cite{J.C.Rosales}.

Let $w \in \langle A\rangle\backslash \{0\}$.
The \emph{Ap\'ery set} of $w$ in $\langle A\rangle$ is $Ape(A,w)=\{s\in \langle A\rangle \mid s-w\notin \langle A\rangle\}$. In \cite{J.C.Rosales}, it is shown that
$$Ape(A,w)=\{N_0,N_1,N_2,\ldots,N_{w-1}\},$$
where $N_r:=\min\{ a_0\mid a_0\equiv r\mod w, \ a_0\in \langle A\rangle\}$, $0\leq r\leq w-1$. We usually take $w:=a$.

Brauer and Shockley \cite{J. E. Shockley} and Selmer \cite{E. S. Selmer} gave the following results, respectively.

\begin{lem}[\cite{J. E. Shockley}, \cite{E. S. Selmer}]\label{LiuXin001}
Suppose $A=(a, B)=(a, b_1, b_2, \ldots, b_k)$. The \emph{Ap\'ery set} of $a$ in $\langle A\rangle$ is
$Ape(A,a)=\{N_0,N_1,N_2,\ldots,N_{a-1}\}$. Then the Frobenius number and genus of $A$ are, respectively:
\begin{align*}
F(A)&=F(a, B)=\max_{r\in \lbrace 0, 1, \ldots, a-1\rbrace}N_r -a,\\
g(A)&=g(a, B)=\frac{1}{a}\sum_{r=1}^{a-1}N_r-\frac{a-1}{2}.
\end{align*}
\end{lem}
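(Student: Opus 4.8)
The plan is to exploit the single structural fact that makes the Apéry set useful: since $a\in\langle A\rangle$, the semigroup is stable under adding $a$, so each residue class modulo $a$ is controlled by its smallest semigroup element $N_r$. First I would prove the key claim that for every $r\in\{0,1,\dots,a-1\}$,
$$\langle A\rangle\cap(r+a\Z)=\{\,n\in\Z : n\equiv r \pmod a,\ n\ge N_r\,\}.$$
The inclusion $\supseteq$ is immediate: any such $n$ has the form $N_r+ta$ with $t\in\N$, and since $N_r\in\langle A\rangle$ and $a\in\langle A\rangle$, closure under addition gives $n\in\langle A\rangle$. The inclusion $\subseteq$ is just the minimality in the definition of $N_r$: any element of $\langle A\rangle$ congruent to $r$ is at least $N_r$. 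This claim says that within class $r$ the semigroup elements are exactly the tail $N_r, N_r+a, N_r+2a,\dots$, and hence the gaps (non-elements) in class $r$ are precisely the finitely many integers congruent to $r$ that lie strictly below $N_r$.

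For the Frobenius number, I would observe that the largest gap inside class $r$ is $N_r-a$, the element of the class immediately preceding $N_r$, with no positive gap at all when $N_r=r$, i.e. when $r$ itself already lies in $\langle A\rangle$. Taking the maximum over the $a$ residue classes, the overall largest non-element is $\max_r(N_r-a)=\max_r N_r - a$, which is the asserted formula for $F(A)$; here $N_0=0$ contributes $-a$ and is harmless inside the maximum.

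For the genus, I would count the gaps class by class. Writing $N_r=r+a\,m_r$ with $m_r=(N_r-r)/a\in\N$, the gaps in class $r$ are $r,r+a,\dots,r+(m_r-1)a$, exactly $m_r$ of them (for $r=0$ we have $N_0=0$, so $m_0=0$). Summing over $r=1,\dots,a-1$ and using $\sum_{r=1}^{a-1}r=\tfrac{a(a-1)}{2}$ gives
$$g(A)=\sum_{r=1}^{a-1}m_r=\frac1a\sum_{r=1}^{a-1}N_r-\frac1a\cdot\frac{a(a-1)}2=\frac1a\sum_{r=1}^{a-1}N_r-\frac{a-1}2,$$
which is Selmer's formula.

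There is essentially no hard step here; the only thing requiring genuine care is the structural claim, and everything else is bookkeeping. The one point worth stating explicitly is why each $N_r$ is a well-defined finite minimum: because $\gcd(A)=1$ forces $\langle A\rangle$ to meet every residue class modulo $a$, so each $N_r$ exists and each class contributes only finitely many gaps, which is exactly what guarantees that $F(A)$ and $g(A)$ are finite.
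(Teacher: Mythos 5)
Your proof is correct and is the standard argument for the Brauer--Shockley and Selmer formulas; the paper itself gives no proof, simply citing the original references, and your residue-class decomposition of $\langle A\rangle$ via the Ap\'ery elements $N_r$ is exactly the classical route taken there. The one point you rightly flag --- that $\gcd(A)=1$ guarantees each $N_r$ exists and each class has only finitely many gaps --- is the only place where care is needed, and you handle it correctly.
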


\begin{lem}{\em \cite[Proposition 2.20]{J.C.Rosales}}\label{Pseudo-FP-Prop}
Let $\langle A\rangle$ be a numerical semigroup. Then
$$PF(A)=\left\{w-a\mid w \in \max\nolimits_{\preceq_{\langle A\rangle}} Ape(A, a) \right\},$$
where $Ape(A,a)=\{N_0,N_1,\ldots,N_{a-1}\}$.
\end{lem}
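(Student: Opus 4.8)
The plan is to establish a bijection between $PF(A)$ and the set of $\preceq_{\langle A\rangle}$-maximal elements of $Ape(A,a)$ via the shift map $u \mapsto u+a$, and to prove the resulting two inclusions separately. Throughout I would use the defining property of the Apéry set in the form
$$s \in Ape(A,a) \iff s \in \langle A\rangle \ \text{ and } \ s-a \notin \langle A\rangle,$$
which is immediate from $Ape(A,a)=\{s\in\langle A\rangle \mid s-a\notin\langle A\rangle\}$, together with the fact that $a \in \langle A\rangle\setminus\{0\}$ (here $a$ is a positive generator, and it is the element with respect to which the Apéry set is formed).

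First I would show that if $u \in PF(A)$ then $u+a$ is a $\preceq_{\langle A\rangle}$-maximal element of $Ape(A,a)$. Since $a \in \langle A\rangle\setminus\{0\}$, the absorbing property of pseudo-Frobenius numbers gives $u+a \in \langle A\rangle$; moreover $(u+a)-a = u \notin \langle A\rangle$, so $u+a \in Ape(A,a)$. To check maximality, suppose $u+a \preceq_{\langle A\rangle} w$ with $w \in Ape(A,a)$, so that $s:=w-(u+a) \in \langle A\rangle$. If $s\neq 0$, then the absorbing property yields $u+s \in \langle A\rangle$; but $u+s = w-a$, contradicting $w \in Ape(A,a)$. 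Hence $s=0$ and $w=u+a$, proving $u+a \in \max_{\preceq_{\langle A\rangle}} Ape(A,a)$.

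Conversely, I would show that if $w \in \max_{\preceq_{\langle A\rangle}} Ape(A,a)$ then $u:=w-a \in PF(A)$. Membership of $w$ in the Apéry set gives $u = w-a \notin \langle A\rangle$, the first requirement for a pseudo-Frobenius number. For the absorbing property, take any $s \in \langle A\rangle\setminus\{0\}$ and suppose for contradiction that $u+s \notin \langle A\rangle$. Then $w+s \in \langle A\rangle$ as a sum of two elements of $\langle A\rangle$, while $(w+s)-a = u+s \notin \langle A\rangle$, so $w+s \in Ape(A,a)$. Since $(w+s)-w = s \in \langle A\rangle$ we have $w \preceq_{\langle A\rangle} w+s$, and maximality of $w$ forces $s=0$, a contradiction. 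Thus $u+s \in \langle A\rangle$ for every nonzero $s \in \langle A\rangle$, so $u \in PF(A)$.

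Combining the two directions, the shift $u \mapsto u+a$ is a bijection from $PF(A)$ onto $\max_{\preceq_{\langle A\rangle}} Ape(A,a)$, which is exactly the claimed identity $PF(A)=\{w-a \mid w \in \max_{\preceq_{\langle A\rangle}} Ape(A,a)\}$. There is no computation beyond these membership checks; the only genuinely delicate point is the bookkeeping of the case $s=0$ versus $s\neq 0$ in each direction. The equivalence hinges precisely on the fact that the absorbing condition defining a pseudo-Frobenius number quantifies only over nonzero $s$, which is exactly what corresponds to allowing the comparison "$w$ equals itself" in the notion of a $\preceq_{\langle A\rangle}$-maximal element.
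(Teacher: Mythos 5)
Your argument is correct and complete: both inclusions are established via the shift $u\mapsto u+a$, and the case analysis on $s=0$ versus $s\neq 0$ is handled properly in each direction. The paper itself gives no proof of this lemma (it is quoted as Proposition 2.20 of \cite{J.C.Rosales}), and your proof is essentially the standard one from that reference, so there is nothing further to compare.
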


The following result easily follows from the definition of $N_r$ with $r\in \{0,\ldots,a-1\}$.
\begin{prop}\label{0201}
Let $A=(a, b_1, \ldots, b_k)$, $\gcd(a, d)=1, \ d\in \mathbb{Z}\backslash \{0\}$. Then we have
\begin{equation}
\{N_0, N_1, N_2,\ldots, N_{a-1}\}=\{N_{d\cdot 0}, N_{d\cdot 1}, N_{d\cdot 2},\ldots, N_{d\cdot (a-1)}\}.\label{0402}
\end{equation}
\end{prop}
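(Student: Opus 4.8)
The plan is to reduce the entire statement to two elementary observations: that the subscript of $N$ is meaningful only modulo $a$, and that multiplication by $d$ permutes the residues modulo $a$. First I would note that, by its defining formula $N_r=\min\{a_0\mid a_0\equiv r \bmod a,\ a_0\in\langle A\rangle\}$ (taking $w=a$), the quantity $N_r$ depends only on the residue class of $r$ modulo $a$. Hence one may unambiguously write $N_s$ for any integer $s$, with the convention $N_s:=N_{s \bmod a}$. In particular each symbol $N_{d\cdot j}$ appearing on the right-hand side of \eqref{0402} is to be read as $N_{(d\cdot j)\bmod a}$, which is legitimate because $N$ is only defined on the complete residue system $\{0,1,\dots,a-1\}$.

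Second, I would invoke the hypothesis $\gcd(a,d)=1$. This makes $d$ a unit in $\mathbb{Z}/a\mathbb{Z}$, so the map $\sigma\colon\{0,1,\dots,a-1\}\to\{0,1,\dots,a-1\}$ given by $\sigma(j)=(d\cdot j)\bmod a$ is a bijection; equivalently, $\{d\cdot 0,\,d\cdot 1,\dots,d\cdot(a-1)\}$ runs modulo $a$ over a complete residue system, hitting each residue in $\{0,1,\dots,a-1\}$ exactly once. Combining this with the first observation, the indexed family $N_{d\cdot 0},N_{d\cdot 1},\dots,N_{d\cdot(a-1)}$ is nothing but a reindexing of $N_0,N_1,\dots,N_{a-1}$ through the permutation $\sigma$, and therefore the two collections coincide as sets, which is exactly \eqref{0402}.

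The only point requiring care — and it is a mild one — is the indexing convention: one must be explicit that $N_{d\cdot j}$ is interpreted modulo $a$ rather than as a literal subscript, a reading that is in any case forced by the fact that $N_r$ is defined solely for $0\le r\le a-1$. Beyond this, there is no genuine obstacle. The result is a direct consequence of the fact that a unit acts as a permutation on $\mathbb{Z}/a\mathbb{Z}$, and no property specific to the generators $b_1,\dots,b_k$ or to the particular shape of $A$ enters the argument.
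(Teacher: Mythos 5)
Your proof is correct and follows exactly the argument the paper has in mind: the paper simply asserts that the proposition follows ``by the definition of $N_r$,'' and your two observations --- that $N_r$ depends only on $r \bmod a$ and that multiplication by the unit $d$ permutes the residues modulo $a$ --- are precisely the details being left implicit. Your explicit remark about the indexing convention $N_{d\cdot j}:=N_{(d\cdot j)\bmod a}$ is a helpful clarification that the paper omits.
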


We observe that the argument in \cite{Liu-Xin} for $A=(a,a+dB)$ naturally extends to the general case of
$A=(a, Ha+dB)=(a, h_1a+db_1, h_2a+db_2,\ldots, h_ka+db_k)$ with $\gcd(a,d)=1$.
We find $F(A)$ and $g(A)$ are closely related to a minimization problem defined by:
$$O_{B}^{H}(M):=\min\left\{\sum_{i=1}^kh_ix_i \mid \sum_{i=1}^k b_ix_i=M, \ x_i\in\mathbb{N}, 1\leq i\leq k\right\}.$$
It reduces to the $O_B(M)$ in \cite{Liu-Xin} when $H=(1,1,\dots,1)$.

For convenience, henceforth we shall always assume $x_i\in\mathbb{N}, 1\leq i\leq k$ unless specified otherwise.
\begin{lem}\label{0202}
Let $A=(a, h_1a+db_1, \ldots, h_ka+db_k)$, $a, k, h\in\mathbb{P}$, $a\geq 2$, $d\in \mathbb{Z}\backslash \{0\}$ and $\gcd(A)=1$, $m\in\mathbb{N}$,  $\gcd(a, d)=1$. If $d$ is a negative integer, then we require $h_ia+db_i>1$ for $1\leq i\leq k$.
For a given $ 0\leq r\leq a-1$, we have
\begin{equation}\label{0203}
N_{dr}=\min \left\{O_{B}^{H}(ma+r) \cdot a+(ma+r)d \mid m\in \mathbb{N}\right\}.
\end{equation}
\end{lem}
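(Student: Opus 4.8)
The plan is to unwind the definition of $N_{dr}$ and show that the minimization over representations in $\langle A\rangle$ factors through the inner problem $O_B^H$. By definition, $N_{dr}$ is the smallest element of $\langle A\rangle$ lying in the residue class $dr$ modulo $a$. A generic element of $\langle A\rangle$ has the form $ax+\sum_{i=1}^k(h_ia+db_i)x_i$ with $x,x_i\in\mathbb{N}$. Regrouping by the factor $a$ gives the key identity
\begin{equation*}
ax+\sum_{i=1}^k(h_ia+db_i)x_i=a\Big(x+\sum_{i=1}^k h_ix_i\Big)+dM,\qquad M:=\sum_{i=1}^k b_ix_i.
\end{equation*}
Reducing modulo $a$ and using $\gcd(a,d)=1$, such an element lies in the class $dr$ precisely when $M\equiv r\pmod a$, i.e.\ $M=ma+r$ for some $m\in\mathbb{N}$.

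First I would fix the residue and peel off the free generator $a$: for a fixed tuple $(x_1,\dots,x_k)$ the quantity above is minimized over $x\in\mathbb{N}$ at $x=0$, since $a>0$. This reduces the value to $a\sum_{i=1}^k h_ix_i+dM$. Next, grouping the tuples by their common value $M=\sum_{i=1}^k b_ix_i$, the term $dM$ is constant within each group, so minimizing over all tuples with $\sum_{i=1}^k b_ix_i=M$ amounts to minimizing $a\sum_{i=1}^k h_ix_i$; as $a>0$ this minimum equals $a\cdot O_B^H(M)$ by the very definition of $O_B^H$. Finally, ranging over the admissible values $M=ma+r$ with $m\in\mathbb{N}$ yields
\begin{equation*}
N_{dr}=\min_{m\in\mathbb{N}}\big\{O_B^H(ma+r)\cdot a+(ma+r)d\big\},
\end{equation*}
which is \eqref{0203}.

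To make this rigorous I would prove the two inequalities separately. For ``$\le$'': any $m$ for which $ma+r$ is representable by $B$ yields an explicit element of $\langle A\rangle$ (take $x=0$ together with an optimal tuple for $O_B^H(ma+r)$) in the class $dr$, whose value is exactly the right-hand summand; hence $N_{dr}$ is at most the minimum. For ``$\ge$'': an optimal representation of $N_{dr}$ produces a tuple $(x_i)$ with $M=\sum_{i=1}^k b_ix_i=ma+r$ for some $m\in\mathbb{N}$, and the identity above, combined with $x\ge 0$ and $\sum_{i=1}^k h_ix_i\ge O_B^H(M)$, gives $N_{dr}\ge O_B^H(ma+r)\cdot a+(ma+r)d$. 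I would also record that $N_{dr}$ is finite (since $\langle A\rangle$ is a numerical semigroup, every residue class modulo $a$ is eventually occupied), so the right-hand minimum is taken over a non-empty set, with the convention $\min\emptyset=+\infty$ discarding infeasible values of $m$.

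The algebra here is transparent; the one point demanding care — and the main obstacle — is the decoupling of the two minimizations: one must verify that an optimal representation of $N_{dr}$ indeed has $x=0$ and realizes $O_B^H(M)$, so that no cheaper element is overlooked when the free generator $a$ and the inner problem are optimized independently. This is precisely where the positivity of $a$ and the defining minimality of $O_B^H$ are used.
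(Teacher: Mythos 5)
Your proposal is correct and follows essentially the same route as the paper: expand a generic element of $\langle A\rangle$, observe that the free generator $a$ contributes nothing to the residue class and is best set to zero, use $\gcd(a,d)=1$ to translate the congruence on $dr$ into $\sum b_ix_i\equiv r\pmod a$, and then decouple the inner minimization into $O_B^H(ma+r)$. Your version is in fact slightly more careful than the paper's, which silently drops the $ax$ term and does not remark on infeasible values of $m$; these additions are harmless refinements of the same argument.
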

\begin{proof}
We obtain the following equalities:
\begin{align*}
N_{dr}&=\min\{ a_0\mid a_0\equiv dr\mod a;\ a_0\in \langle A\rangle\}
\\&=\min\left\{\sum_{i=1}^k(h_ia+db_i)x_i\mid \sum_{i=1}^k(h_ia+db_i)x_i\equiv dr\mod a, \ x_i\in\mathbb{N}, 1\leq i\leq k \right\}
\\&=\min\left\{\left(\sum_{i=1}^kh_ix_i\right)\cdot a+d\cdot\sum_{i=1}^kb_ix_i\mid d\sum_{i=1}^kb_ix_i\equiv dr\mod a, \ x_i\in\mathbb{N}, 1\leq i\leq k \right\}
\\&=\min\left\{\left(\sum_{i=1}^kh_ix_i\right)\cdot a+d\cdot\sum_{i=1}^kb_ix_i\mid \sum_{i=1}^kb_ix_i\equiv r\mod a, \ x_i\in\mathbb{N}, 1\leq i\leq k \right\}
\\&=\min\left\{\left(\sum_{i=1}^kh_ix_i\right)\cdot a+d(ma+r)\mid \sum_{i=1}^kb_ix_i=ma+r, \ m, x_i\in\mathbb{N}, 1\leq i\leq k \right\}.
\end{align*}
Now for fixed $m$, and hence fixed $M=ma+r$,
$\sum_{i=1}^kh_ix_i$ is minimized to $O_{B}^H(ma+r)$. This completes the proof.
\end{proof}

By Lemma \ref{0202}, we can define an intermediate function with respect to $m\in \mathbb{N}$, namely:
$$N_{dr}(m):=O_{B}^{H}(ma+r) \cdot a+(ma+r)d.$$
In our cases, it is not hard to show that $N_{dr}(m)$ is increasing with respect to $m$, so that $N_{dr}=N_{dr}(0)$. This allows us to
further obtain the formulas of $F(A)$ and $g(A)$.

Before proceeding further, it is necessary to introduce the following definition.
For a given positive integer sequence $B=(b_1, b_2, \ldots, b_k)$, $1=b_1<b_2<\cdots <b_k$ and $M\in \mathbb{N}$, let
\begin{equation}\label{hahaha4}
opt_B(M):=O_B(M)=\min\left\{\sum_{i=1}^kx_i \mid \sum_{i=1}^kb_ix_i=M, \ \ M,x_i\in \mathbb{N}, 1\leq i\leq k\right\}.
\end{equation}
The problem $opt_B(M)$ is called \emph{the change-making problem} \cite{AnnAdamaszek}. A strategy is called greedy when it uses as many of the maximum values as possible, followed by as many of the next highest values as possible, and so on. 
We denote by $grd_B(M)$ the number of elements used in $B$ by the greedy strategy. Then we have $opt_B(M)\leq grd_B(M)$. If the greedy solution is always optimal, i.e., $opt_B(M)=grd_B(M)$ for all $M>0$, then we call the sequence $B$ \emph{orderly}; otherwise, we call it \emph{non-orderly}. For example, the sequence $B=(1,5,16)$ is non-orderly. Because $20=16+1+1+1+1=5+5+5+5$, we have $opt_B(20)=4<5=grd_B(20)$.

In general, it is hard to determine if a sequence is orderly. The following lemma gives a nice sufficient condition.
\begin{lem}[One-Point Theorem, \cite{LJCowen,TCHu,MJMagazine}]\label{one-point}
Suppose $B^{\prime}=(1,b_1,\ldots,b_k)$ is orderly and $b_{k+1}>b_k$. Let $s=\lceil b_{k+1} / b_k\rceil$. Then the sequence $B=(1,b_1,\ldots,b_k,b_{k+1})$ is orderly if and only if $opt_B(sb_k)=grd_B(sb_k)$.
\end{lem}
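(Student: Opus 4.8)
The plan is to prove the nontrivial (``if'') direction by a least-counterexample argument that collapses everything to the single value $sb_k$. The ``only if'' direction is immediate: if $B$ is orderly then $opt_B(M)=grd_B(M)$ for every $M$, in particular at $M=sb_k$. For the converse, set $\rho:=sb_k-b_{k+1}$, so that $b_{k+1}\le sb_k<b_{k+1}+b_k$ and $0\le\rho<b_k$, and note $s\ge 2$ since $b_{k+1}>b_k$. I would first record the arithmetic form of the hypothesis: because $grd_B(sb_k)=1+grd_{B'}(\rho)$ (greedy takes one $b_{k+1}$, leaving the remainder $\rho<b_k$) while $opt_B(sb_k)\le opt_{B'}(sb_k)=s$ (using $s$ copies of $b_k$ and orderliness of $B'$), the assumption $opt_B(sb_k)=grd_B(sb_k)$ is equivalent to $grd_{B'}(\rho)\le s-1$. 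Now assume for contradiction that $B$ is non-orderly and let $w$ be the least $M>0$ with $opt_B(M)<grd_B(M)$.

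Two easy localizations come next. If $w<b_{k+1}$, then neither a greedy nor an optimal representation of $w$ can use $b_{k+1}$, so $opt_B(w)=opt_{B'}(w)=grd_{B'}(w)=grd_B(w)$ by orderliness of $B'$, a contradiction; hence $w\ge b_{k+1}$. Moreover, by minimality an optimal representation of $w$ cannot use $b_{k+1}$ (deleting one copy would yield a strictly smaller counterexample), so $opt_B(w)=opt_{B'}(w)=grd_{B'}(w)$.

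The key reduction is to force $w$ into the window $b_{k+1}\le w<b_{k+1}+b_k$, i.e.\ that greedy uses exactly one copy of $b_{k+1}$ and leaves a remainder below $b_k$. If greedy used at least two copies (that is, $w\ge 2b_{k+1}$), I would remove a $b_{k+1}$: since the optimal $B'$-representation of $w$ contains at least $s$ copies of $b_k$ (because $w\ge 2b_{k+1}>2(s-1)b_k$ and $s\ge2$), I can trade $s$ copies of $b_k$ for a representation of $\rho$, giving $opt_B(w-b_{k+1})\le opt_B(w)-s+grd_{B'}(\rho)\le opt_B(w)-1$, where the last inequality is exactly the hypothesis $grd_{B'}(\rho)\le s-1$; as $grd_B(w-b_{k+1})=grd_B(w)-1$, this makes $w-b_{k+1}$ a smaller counterexample. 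If instead the remainder were $\ge b_k$, peeling one $b_k$ off both the greedy and the optimal representation shows $w-b_k$ is a smaller counterexample. Either way minimality is violated, so $w=b_{k+1}+j$ with $0\le j<b_k$.

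Finally, in the window the counterexample condition simplifies cleanly: $grd_B(w)=1+grd_{B'}(j)$ while $opt_B(w)=grd_{B'}(b_{k+1}+j)$, so $w$ is a counterexample precisely when $grd_{B'}(b_{k+1}+j)\le grd_{B'}(j)$. Writing $b_{k+1}+j=sb_k+(j-\rho)$ when $j\ge\rho$, or $b_{k+1}+j=(s-1)b_k+(\delta+j)$ with $\delta:=b_k-\rho$ when $j<\rho$, evaluating the $B'$-greedy count in each case and invoking subadditivity of $grd_{B'}=opt_{B'}$ (valid as $B'$ is orderly), together with $grd_{B'}(j+b_k)=1+grd_{B'}(j)$, yields in both cases $grd_{B'}(\rho)\ge s$. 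This contradicts $grd_{B'}(\rho)\le s-1$ and finishes the proof. I expect the third paragraph, the reduction to the window, to be the main obstacle: because the greedy coin count is \emph{not} monotone under enlarging the coin set, the naive comparison of $grd_B$ with $grd_{B'}$ breaks down, so every estimate must be routed through subadditivity of the optimal function, and the hypothesis $grd_{B'}(\rho)\le s-1$ must be invoked at exactly the right place to forbid a second top coin. By contrast the closing window computation, though it splits into $j\ge\rho$ and $j<\rho$, is a short mechanical consequence of subadditivity and the definition $s=\lceil b_{k+1}/b_k\rceil$.
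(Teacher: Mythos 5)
The paper does not actually prove this lemma: it is imported verbatim from the cited references (Cowen--Cowen--Steinberg, Hu--Lenard, Magazine--Nemhauser--Trotter) and used as a black box to establish Lemma \ref{B137ordely}, so there is no in-paper proof to compare against. Judged on its own, your argument is correct and essentially reconstructs the classical proof. I checked the load-bearing steps: the translation of the hypothesis into $grd_{B'}(\rho)\le s-1$ is right (you only need the forward implication, which follows from $grd_B(sb_k)=1+grd_{B'}(\rho)$ and $opt_B(sb_k)\le s$; note $s\ge 2$ guarantees greedy takes exactly one $b_{k+1}$ at $sb_k$); the minimal counterexample $w$ satisfies $w\ge b_{k+1}$ and admits an optimal representation avoiding $b_{k+1}$, hence $opt_B(w)=grd_{B'}(w)$; the exchange of $s$ copies of $b_k$ for one $b_{k+1}$ plus a representation of $\rho$ (legitimate because the greedy $B'$-representation of $w$ has $\lfloor w/b_k\rfloor\ge s$ copies of $b_k$ when $w\ge 2b_{k+1}>sb_k$) correctly rules out $w\ge 2b_{k+1}$; and the window computation does close in both cases, e.g.\ for $j\ge\rho$ one gets $s+grd_{B'}(j-\rho)\le grd_{B'}(j)\le grd_{B'}(j-\rho)+grd_{B'}(\rho)$, forcing $grd_{B'}(\rho)\ge s$, and for $j<\rho$ the identity $grd_{B'}(j+b_k)=1+grd_{B'}(j)$ plus subadditivity gives the same. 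Two small points worth writing out if you formalize this: the claim that the hypothesis is \emph{equivalent} to $grd_{B'}(\rho)\le s-1$ needs a short extra case analysis for the unused converse direction, and the step $grd_B(w-b_k)=grd_B(w)-1$ in the second localization silently uses $w-b_k\ge b_{k+1}$ so that greedy on $w-b_k$ still takes one top coin. Neither affects correctness.
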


Using the \emph{One-Point Theorem}, we construct a class of orderly sequences as follows.
\begin{lem}\label{B137ordely}
Let $k\in \mathbb{P}$, $b_1=1$, $b_{i+1}=s_ib_i+1$, and $s_i\geq s_{i-1}$ for $1\leq i\leq k-1$. Then the sequence $B=\left(b_1,b_2,\ldots,b_k\right)$ is orderly, i.e., $opt_B(M)=grd_B(M)$ for all $M\in \mathbb{P}$.
\end{lem}
\begin{proof}
We proceed by induction on $k$ to establish the result.
The lemma clearly holds for $k\leq 2$, as the sequences $B=(1)$ and $B=\left(1,b_2\right)$ are orderly. Suppose $\left(1,b_2,\ldots,b_{k-1}\right)$ is orderly. By assumption, $\left\lceil \frac{b_{k}}{b_{k-1}}\right\rceil=s_{k-1}+1$. By $\left\lceil \frac{b_{k}}{b_{k-1}}\right\rceil\cdot b_{k-1}=(s_{k-1}+1)b_{k-1}=b_{k}+b_{k-1}-1=b_{k}+s_{k-2}b_{k-2}$ and $s_{k-2}+1\leq s_{k-1}+1$, we have  $opt_B\left(\left\lceil \frac{b_{k}}{b_{k-1}}\right\rceil\cdot b_{k-1}\right)=grd_B\left(\left\lceil \frac{b_{k}}{b_{k-1}}\right\rceil\cdot b_{k-1}\right)$. By Lemma \ref{one-point}, the sequence $B$ is orderly. This completes the proof.
\end{proof}

For the sequence $B$ as in Lemma \ref{B137ordely}, we can further study the Frobenius problem for
$A=(a,Ha+dB)=\left(a,h_1a+b_1d,h_2a+b_2d,\ldots,h_ka+b_kd\right)$.

\begin{prop}\label{greedproper}
Suppose $B=\left(b_1,b_2,\ldots,b_k\right)$, with $b_1=1$, $b_{i+1}=s_ib_i+1$, and $s_i\geq s_{i-1}$ for $1\leq i\leq k-1$.
Then, for any $M\in \mathbb{P}$, $grd_B(M)=(x_1,\dots, x_k)$ is uniquely characterized by the following three properties.
\begin{enumerate}
  \item $x_k=\left\lfloor \frac{M}{b_k}\right\rfloor$.

  \item $x_i\in \{0,1,\ldots,s_i\}$ for every $1\leq i\leq k-1$.

  \item if $2\leq i\leq k-1$ and $x_i=s_i$, then $x_1=\cdots =x_{i-1}=0$.
\end{enumerate}
\end{prop}
\begin{proof}
By $b_{i+1}=s_ib_i+1$ for $1\leq i\leq k-1$ and Lemma \ref{B137ordely}, the proof is obvious.
\end{proof}

If a solution $X=(x_1,x_2,\ldots,x_k)$ of $\sum_i x_ib_i=M$ satisfies the above conditions (1), (2), and (3), then we call $X=X(M)$ the \emph{greedy presentation} of $M$.
Define $R(M)=\{X(m): 0\leq m \leq M\}$, and define a colexicographic order on $R(M)$ as follows:
\begin{align*}
(x_1^{\prime},x_2^{\prime},\ldots,x_k^{\prime})\preceq (x_1,x_2,\ldots,x_k)
\Longleftrightarrow & x_i^{\prime}=x_i\ \ \text{for any}\ \ i>0\ \ \text{or}
\\& x_j^{\prime}<x_j, x_i^{\prime}=x_i\ \ \text{for a certain}\ \ j>0\ \ \text{and any}\ \ i>j.
\end{align*}
Obviously, the order relation $\preceq$ is a total order on $R(M)$.

\begin{rem}
Lemma \ref{B137ordely} essentially provides the construction process of the greedy presentations $X=X(m)=(x_1,x_2,\ldots,x_k)$ in $R(M)$.
\end{rem}

\begin{exa}
Let $B=(1,3,7,15)$ and $M=23$. The elements in $R(M)$ are listed as follows.
\begin{align*}
&X(0)=(0,0,0,0),\ \ \ \ X(1)=(1,0,0,0),\ \ \ \ X(2)=(2,0,0,0),\ \ \ \ X(3)=(0,1,0,0),
\\&X(4)=(1,1,0,0),\ \ \ \ X(5)=(2,1,0,0),\ \ \ \ X(6)=(0,2,0,0),\ \ \ \ X(7)=(0,0,1,0),
\\&X(8)=(1,0,1,0),\ \ \ \ X(9)=(2,0,1,0),\ \ \ X(10)=(0,1,1,0),\ \ X(11)=(1,1,1,0),
\\&X(12)=(2,1,1,0),\ \ X(13)=(0,2,1,0),\ \ X(14)=(0,0,2,0),\ \ X(15)=(0,0,0,1),
\\&X(16)=(1,0,0,1),\ \ X(17)=(2,0,0,1),\ \ X(18)=(0,1,0,1),\ \ X(19)=(1,1,0,1),
\\&X(20)=(2,1,0,1),\ \ X(21)=(0,2,0,1),\ \ X(22)=(0,0,1,1),\ \ X(23)=(1,0,1,1).
\end{align*}
\end{exa}

\subsection{The Frobenius Problem}

Now we consider the $O_B^H(M)$ for $B=\left(b_1,b_2,\ldots,b_k\right)$, $H=(h_1,h_2,\ldots,h_k)$, and any $M\in \mathbb{P}$, where
\begin{align}
& b_1=1,\ b_{i+1}=s_ib_i+1\ \ \text{and}\ \ s_i\geq s_{i-1}\ \ \text{for}\ \ 1\leq i\leq k-1,\label{BBcondition}
\\& h_i=ub_i+1, u\in \mathbb{P}\ \ \text{for}\ \ 1\leq i\leq k.\label{HHcondition}
\end{align}
We obtain
\begin{align*}
O_{B}^{H}(M)&=\min\left\{\sum_{i=1}^k(ub_i+1)x_i \mid \sum_{i=1}^k b_ix_i=M, \ x_i\in\mathbb{N}, 1\leq i\leq k\right\}
\\&=\min\left\{uM+\sum_{i=1}^kx_i \mid \sum_{i=1}^k b_ix_i=M, \ x_i\in\mathbb{N}, 1\leq i\leq k\right\}\\
&=uM+opt_B(M).
\end{align*}

For Collection \texttt{GCNS} with $\gcd(a,d)=1$, we have
$\gcd(a,h_1a+db_1)=\gcd(a,db_1)=1$. Furthermore, we have $\gcd(A)=1$. Therefore, $\langle A\rangle$ is a numerical semigroup. Now, we give a characterization of its Ap\'ery set.

\begin{thm}\label{HBuadNDr}
Let $A$ be in \emph{Collection \texttt{GCNS}}. If $ua+d+k-2\geq \sum_{i=1}^{k-1}s_i$, then $N_{dr}(m)$ is increasing with respect to $m\in \mathbb{N}$. More precisely, if $X(r)=(x_1,x_2,\ldots,x_k)$ is the greedy presentation of $r$, then
\begin{equation}\label{Ndr-pre}
N_{dr}=\sum_{i=1}^kx_ia+r(ua+d)=\left(\sum_{i=1}^k(ub_i+1)x_i\right)a+rd.
\end{equation}
\end{thm}
\begin{proof}
Recall that if $X(ma+r)=(y_1,y_2,\ldots,y_k)$, then
$$N_{dr}(m)=\left(u(ma+r)+\sum_{i=1}^k y_i\right)a+(ma+r)d.$$
Write $X((m+1)a+r)=(z_1,\ldots,z_k)$. Then we see that $z_{k}\geq y_k$, and
\begin{align*}
N_{dr}(m+1)-N_{dr}(m)&=ua^2+ad+\left(\sum_{i=1}^k z_i-\sum_{i=1}^k y_i\right)a
\\& \geq\left(ua+d+\sum_{i=1}^{k-1}z_i-\sum_{i=1}^{k-1}y_i\right)a
\\(\textrm{by\ Proposition\ \ref{greedproper}})\quad  & \geq\left(ua+d-\sum_{i=1}^{k-1}s_i+k-2\right)a\geq 0.
\end{align*}
Thus $N_{dr}(m)$ is increasing so that $N_{dr}=N_{dr}(0)$. This completes the proof.
\end{proof}

In Equation \eqref{Ndr-pre} with $X(r)=(x_1,x_2,\ldots,x_k)$, we call $w(r)=\sum_{i=1}^k(ub_i+1)x_i$ the \emph{weight} of $r$. Now we have the following result.

\begin{lem}\label{colex-incre}
Let $M\in \mathbb{N}$. The sequences $B$ and $H$ satisfy conditions \eqref{BBcondition} and \eqref{HHcondition}, respectively, and $s_i\leq u+1$ for $1\leq i\leq k-1$. Suppose $X(r_1)=(x_1^{\prime},\ldots,x_k^{\prime})\in R(M)$ and $X(r_2)=(x_1,\ldots,x_k)\in R(M)$. If $(x_1^{\prime},\ldots,x_k^{\prime})\preceq (x_1,\ldots,x_k)$, then $w(r_1)\leq w(r_2)$. In particular, if $r_1\neq r_2$ and $s_i<u+1$, then $w(r_1)<w(r_2)$.
\end{lem}
\begin{proof}
We consider the following two cases.

If $x_i^{\prime}=x_i$ for any $1\leq i\leq k$, then $w(r_1)=w(r_2)$.

If there exists a $1\leq t\leq k$ such that $x_t^{\prime}<x_t$ and $x_{t+1}^{\prime}=x_{t+1},\ldots,x_k^{\prime}=x_k$, then $x_t^{\prime}+1\leq x_t$ and
\begin{align*}
w(r_1)&=\sum_{i=1}^k(ub_i+1)x_i^{\prime}=\sum_{i=1}^{t}(ub_i+1)x_i^{\prime}
+\sum_{j=t+1}^{k}(ub_j+1)x_j^{\prime}
\\&\leq (ub_1+1)s_1+\sum_{i=2}^{t-1}(ub_i+1)(s_i-1)+(ub_t+1)x_t^{\prime}+\sum_{j=t+1}^{k}(ub_j+1)x_j
\\&=\sum_{i=1}^{t-2}\big((ub_i+1)s_i-ub_{i+1}-1\big)+(ub_{t-1}+1)s_{t-1}+(ub_t+1)x_t^{\prime}
+\sum_{j=t+1}^{k}(ub_j+1)x_j
\\&=\sum_{i=1}^{t-2}(s_i-u-1)+(ub_t-u+s_{t-1})+(ub_t+1)x_t^{\prime}+\sum_{j=t+1}^{k}(ub_j+1)x_j
\\&\leq \sum_{i=1}^{t-2}(s_i-u-1)+(ub_t+1)+(ub_t+1)x_t^{\prime}+\sum_{j=t+1}^{k}(ub_j+1)x_j
\\&\leq (ub_t+1)x_t+\sum_{j=t+1}^{k}(ub_j+1)x_j\leq w(r_2).
\end{align*}
Now we consider the case $s_i<u+1$. If $t=1$ and $x_1^{\prime}<x_1$, then obviously $w(r_1)<w(r_2)$. If $t>1$ and $x_t^{\prime}<x_t$, then we have $ub_t-u+s_{t-1}<ub_t+1$ in the above formula. Thus, we get $w(r_1)<w(r_2)$.
This completes the proof.
\end{proof}

Now we can obtain the main result of this section.
\begin{thm}\label{a-2ad-22a3d}
Let $A$ be in \emph{Collection \texttt{GCNS}}. Let $X(a-1)=(x_1,x_2,\ldots,x_k)$ be the greedy presentation of $a-1$.
Obviously $x_k=\left\lfloor \frac{a-1}{b_k}\right\rfloor$. If $ua+d+k-2\geq \sum_{i=1}^{k-1}s_i$, $a+d\geq 0$, and $s_i< u+1$ for $1\leq i\leq k-1$, then we have
\begin{align*}
F(A)=\sum_{i=1}^kx_i\cdot a+(a-1)(ua+d)-a.
\end{align*}
Specifically, if $d$ is a positive integer, then we only need $ua+d+k-2\geq \sum_{i=1}^{k-1}s_i$ and $s_i\leq u+1$ for $1\leq i\leq k-1$. The above equation still holds.
\end{thm}
\begin{proof}
By Theorem \ref{HBuadNDr}, Lemma \ref{colex-incre}, and Equation \eqref{Ndr-pre}, when $s_i<u+1$, it follows that
$$N_{d(r+1)}-N_{dr}=(w(r+1)-w(r))a+d\geq a+d\geq0.$$
Therefore, we have
$$\max N_{dr}=N_{d(a-1)}=\sum_{i=1}^kx_i\cdot a+(a-1)(ua+d).$$
Further from Lemma \ref{LiuXin001}, we can obtain the Frobenius number formula $F(A)$. In particular, if $d$ is a positive integer, then we only need the constraint $s_i\leq u+1$ by Lemma \ref{colex-incre}. This completes the proof.
\end{proof}

\begin{thm}\label{a-2ad-22a3d111111}
Let $A$ be in \emph{Collection \texttt{GCNS}}. If $ua+d+k-2\geq \sum_{i=1}^{k-1}s_i$, then we have
\begin{align*}
g(A)=\sum_{r=1}^{a-1}\left(\sum_{i=1}^kx_i\right)_r+\frac{(a-1)(ua+d-1)}{2},
\end{align*}
where $\left(\sum_{i=1}^{k}x_i\right)_r$ denotes the sum of elements in the greedy presentation of $r$. Obviously $x_k=\left\lfloor \frac{r}{b_k}\right\rfloor$.
\end{thm}
\begin{proof}
Using Equation \eqref{Ndr-pre} and Lemma \ref{LiuXin001}, we can obtain the genus $g(A)$. This completes the proof.
\end{proof}

\begin{exa}
Let $B=(1,3,7,29)$ and $u=3$. We get $s_1=2,s_2=2,s_3=4$ and $H=(4,10,22,88)$. Therefore, we have
$A=(a,4a+d,10a+3d,22a+7d,88a+29d)$. For $a=21$ and $d=2$, the greedy presentation of $a-1$ is $(0,2,2,0)$, and the Frobenius number is $F(A)=1363$. Similarly, we can obtain the genus $g(A)=694$. We can use the \emph{numericalsgps GAP} package \cite{M.Delgado} to verify the correctness of the above results.
\end{exa}

\begin{exa}
Let $B=(1,3,7,15)$ and $u=2$. We get $s_1=2,s_2=2,s_3=2$ and $H=(3,7,15,31)$. Therefore, we have
$A=(a,3a+d,7a+3d,15a+7d,31a+15d)$. For $a=25$ and $d=-2$, the greedy presentation of $a-1$ is $(2,0,1,1)$, and the Frobenius number is $F(A)=1251$. Similarly, we can obtain the genus $g(A)=622$. If $a=25$ and $d=11$, then we have $F(A)=1539$ and $g(A)=778$.
\end{exa}

From Theorem \ref{a-2ad-22a3d}, we can easily obtain the following results.
\begin{cor}\label{HaHeLiuCor}
Let $A$ be in \emph{Collection \texttt{CNS}} with $d\in\mathbb{P}$. Let $X(a-1)=(x_1,x_2,\ldots,x_k)$ be the greedy presentation of $a-1$. If $a\geq k-1-\frac{d-1}{b-1}$, then we have
\begin{align*}
F(A)=\left((b-1)a-b+d+\sum_{i=1}^kx_i\right)a-d.
\end{align*}
\end{cor}
\begin{proof}
Note that $d$ is a positive integer. The corollary follows by applying Theorem \ref{a-2ad-22a3d} to the special case $s_1=s_2=\cdots =s_{k-1}=b$ and $u=b-1$.
\end{proof}

\begin{cor}\label{HaHeLiuCor111111}
Let $A$ be in \emph{Collection \texttt{CNS}}. If $a\geq k-1-\frac{d-1}{b-1}$, then we have
\begin{align*}
g(A)=\sum_{r=1}^{a-1}\left(\sum_{i=1}^kx_i\right)_r+\frac{(a-1)((b-1)a+d-1)}{2},
\end{align*}
where $\left(\sum_{i=1}^{k}x_i\right)_r$ denotes the sum of elements in the greedy presentation of $r$.
\end{cor}
\begin{proof}
By applying Theorem \ref{a-2ad-22a3d111111} to the special case where $s_1=s_2=\cdots =s_{k-1}=b$ and $u=b-1$, we obtain the result.
\end{proof}

Let's give the following two examples. The proof is left to the readers.
\begin{exa}
Let $A=(a, 2a+d, 4a+3d)$, $a,d\in \mathbb{P}$, $\gcd(a,d)=1$, and $a\geq 2$. We have the Frobenius number
\begin{align*}
&F(A)=2a^2-\left(3-d+2\left\lfloor \frac{a-1}{3}\right\rfloor\right)a-d.
\end{align*}
The genus is as follows:
$$\begin{aligned}
  g(A)=
\left\{
    \begin{array}{lc}
    \frac{(a-1)(2a+d-1)}{2}-\frac{a(a-3)}{3}&\ \text{if}\ \ a\equiv 0 \mod 3;\ \ \ \ \\
    \frac{(a-1)(2a+d-1)}{2}-\frac{(a-1)(a-2)}{3}& \text{if}\ \ a\equiv 1,2 \mod 3.
    \end{array}
\right.
\end{aligned}$$
\end{exa}

\begin{exa}
Let $A=(a, 2a+d, 4a+3d, 8a+7d)$, $a,d\in \mathbb{P}$, $\gcd(a,d)=1$ and $a\geq 7$. We have the Frobenius number
$$F(A)=a^2+\left(\left\lfloor \frac{a-1}{7}\right\rfloor +d+v(a \mod 7)\right)a-d,$$
where $(v(r))_{0\leq r \leq 6} =(0,-2,-1,0,-1,0,1).$
The genus is as follows:
$$g(A)=\frac{(a-1)(a+d-1)}{2}+\frac{a^2+15a +c(a \mod 7)}{14},$$
where $(c(r))_{0\leq r \leq 6} =(0,-16,-20,-12,-20,-16,0).$
\end{exa}

\section{A Generalization of Repunit Numerical Semigroups}

In this section, we focus on Collection \texttt{GCNS} in the case $b_i=\frac{b^i-1}{b-1}$, $u=b-1$, $k=n$, $a=\frac{m(b^n-1)}{b-1}$, $n,b\geq 2$, $d\in \mathbb{P}$, and $m\geq 1$. That is, we consider
the numerical semigroup $\langle A\rangle$ generated by
$$A=\Bigg(\frac{m(b^n-1)}{b-1}, b\cdot \frac{m(b^n-1)}{b-1}+d, b^2\cdot \frac{m(b^n-1)}{b-1}+\frac{b^2-1}{b-1}d, \ldots, b^{n}\cdot \frac{m(b^n-1)}{b-1}+\frac{b^{n}-1}{b-1}d\Bigg).$$
This is a generalization of the repunit numerical semigroup. We solve the Frobenius problem by giving closed formulas for the  Frobenius number, genus, and pseudo-Frobenius numbers.

By Proposition \ref{greedproper} along with $a-1=\frac{m(b^n-1)}{b-1}-1=(m-1)\cdot \frac{b^{n}-1}{b-1}+b\cdot \frac{b^{n-1}-1}{b-1}$,
we have
\begin{align}\label{Ra-1repun}
R(a-1)&=\Big\{(x_1,x_2,\ldots,x_{n}) \mid 0\leq x_n\leq m-1; \ 0\leq x_i\leq b\ \ \text{for}\ \ 1\leq i\leq n-1;\notag
\\&\ \ \ \ \
\text{if}\ \ x_i=b,\ \ \text{then}\ \  x_j=0\ \  \text{for}\ \ j\leq i-1\leq n-2\Big\}.
\end{align}
It can be verified that $\#R(a-1)=\frac{m(b^n-1)}{b-1}=a$.

\begin{thm}\label{m2nd-repunit}
For $A=\left(a,ba+d,b^2a+\frac{b^2-1}{b-1}d,\ldots,b^ka+\frac{b^k-1}{b-1}d\right)$, assume $k=n$, $a=\frac{m(b^n-1)}{b-1}$, $b,n\geq 2$, $m\geq 1$, $d\in \mathbb{P}$, and $\gcd(a,d)=1$. We have
\begin{align*}
&F(A)=(mb^n+d-1)\cdot\frac{m(b^n-1)}{b-1}-d,
\\&g(A)=\frac{m(b^n-1)(mb^n+d-1)}{2(b-1)}+\frac{mb^n(n-2)+1-d}{2}.
\end{align*}
\end{thm}
\begin{proof}
By Corollary \ref{HaHeLiuCor} along with $a-1=(m-1)\cdot \frac{b^{n}-1}{b-1}+b\cdot\frac{b^{n-1}-1}{b-1}$, it follows that
\begin{align*}
F(A)&=\left((b-1)\cdot \frac{m(b^n-1)}{b-1}-b+d+(m-1+b)\right)\cdot \frac{m(b^n-1)}{b-1}-d
\\&=(mb^n+d-1)\cdot\frac{m(b^n-1)}{b-1}-d.
\end{align*}
Next, we analyze the genus $g(A)$. Based on the composition of $R(a-1)$ in Equation \eqref{Ra-1repun}, we obtain
\begin{align*}
\sum_{r=1}^{a-1}\left(\sum_{i=1}^{n} x_i\right)_r=&\sum_{i=0}^{m-1}i\cdot (b^{n-1}+b^{n-2}+\cdots +1)+m\left((n-1)b^{n-2}\cdot\sum_{i=0}^{b-1}i\right)
\\&\ +m\left(\sum_{j=1}^{n-1}\left(b\cdot b^{n-1-j}+(n-1-j)\cdot b^{n-2-j}\cdot\sum_{i=0}^{b-1}i\right)\right)
\\=&\frac{m(m-1)(b^n-1)}{2(b-1)}+\frac{m(n-1)b^{n-1}(b-1)}{2}
\\&\ +\sum_{j=1}^{n-1}m\left(b^{n-j}+\frac{b^{n-1-j}(b-1)(n-j-1)}{2}\right)
\\=&\frac{m(m-1)(b^n-1)}{2(b-1)}+\frac{m(b^n-b)}{2(b-1)}+\frac{mb^n(n-1)}{2}.
\end{align*}
By Corollary \ref{HaHeLiuCor}, we have
\begin{align*}
g(A)&=\frac{m(m(b^n-1)-(b-1))}{2(b-1)}+\frac{mb^n(n-1)}{2}
+\left(\frac{m(b^n-1)}{2(b-1)}-\frac{1}{2}\right)(m(b^n-1)+d-1)
\\&=\frac{m(b^n-1)(mb^n+d-1)}{2(b-1)}+\frac{mb^n(n-2)+1-d}{2}.
\end{align*}
This completes the proof.
\end{proof}

In Theorem \ref{m2nd-repunit}, it is known that the embedding dimension $e(A)\leq n+1$. Setting $m=d=1$ in Theorem \ref{m2nd-repunit}, we have $b^{n}\cdot\frac{m(b^n-1)}{b-1}+\frac{b^{n}-1}{b-1}d=(b^n+1)a$. In fact, we have $e(A)=n$ when $m=d=1$ (see \cite{Rosales.Repunit}). But this does not affect that $\langle A\rangle$ becomes the repunit numerical semigroup \cite{Rosales.Repunit}. Clearly, we have the following corollary.

\begin{cor}{\em \cite[Theorem 20, Theorem 25]{Rosales.Repunit}}\label{correpunits}
Let $n,b\geq 2$, and set $m=d=1$ in Theorem \ref{m2nd-repunit}. Then $S(b,n)=\langle A\rangle$ is the repunit numerical semigroup. Furthermore, we have
\begin{align*}
&F(S(b,n))=\frac{b^n-1}{b-1}b^n-1,
\\&g(S(b,n))=\frac{b^n}{2}\left(\frac{b^n-1}{b-1}+n-2\right).
\end{align*}
\end{cor}

If $b=2$ in Corollary \ref{correpunits}, then $\langle A\rangle$ becomes the Mersenne numerical semigroup \cite{Rosales2016}.

\begin{cor}{\em \cite[Theorem 16, Theorem 19]{Rosales2016}}
Let $n\geq 2$, $b=2$ in Corollary \ref{correpunits}. Then $S(n)=\langle A\rangle$ is the Mersenne numerical semigroup. Furthermore, we have
\begin{align*}
&F(S(n))=2^{2n}-2^n-1,
\\&g(S(n))=2^{n-1}(2^n+n-3).
\end{align*}
\end{cor}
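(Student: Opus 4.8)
The plan is to derive both closed forms by specializing Theorem \ref{m2nd-Mersen} at $m = d = 1$, after first confirming that the resulting semigroup is genuinely the Mersenne semigroup $S(n)$. For the identification I would set $a = 2^n - 1$, $d = 1$, $k = n$ in the defining sequence and compute the generators explicitly: the $i$-th generator is
\[
2^i(2^n - 1) + (2^i - 1) = 2^{n+i} - 1,
\]
so that $A = (2^n - 1, 2^{n+1} - 1, \dots, 2^{2n} - 1)$. The final generator satisfies $2^{2n} - 1 = (2^n + 1)(2^n - 1) = (2^n + 1)a$, hence it is a multiple of $a$ and is redundant. Removing it leaves $(2^n - 1, 2^{n+1} - 1, \dots, 2^{2n-1} - 1)$, which is precisely the minimal system of generators of $S(n)$ from \cite{Rosales2016}; this justifies $\langle A\rangle = S(n)$ and, incidentally, that $e(A) = n$ rather than $n+1$.

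For the Frobenius number I would substitute $m = 1$, $d = 1$ directly into the formula $F(A) = m^2\cdot 2^{2n} - (m^2 + m - md)2^n - dm - d + m$. The coefficient of $2^n$ collapses to $1 + 1 - 1 = 1$ and the constant part to $-1 - 1 + 1 = -1$, giving $F(S(n)) = 2^{2n} - 2^n - 1$ at once. The genus is the same substitution into $g(A) = m(2^{n-1}n - 1) + \frac{1}{2}\big(m^2(2^n-1)^2 + (m^2 + dm - 3m)(2^n-1) - d + 1\big)$. At $m = d = 1$ the additive constant $-d + 1$ vanishes and the bracket reduces to $(2^n - 1)^2 - (2^n - 1) = (2^n - 1)(2^n - 2)$, so
\[
g(S(n)) = 2^{n-1}n - 1 + \frac{1}{2}(2^n - 1)(2^n - 2).
\]

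Since $2^n - 2 = 2(2^{n-1} - 1)$, the factor $\frac{1}{2}$ cancels, leaving $2^{n-1}n - 1 + (2^n - 1)(2^{n-1} - 1)$; expanding the product and collecting powers of two assembles this into the factored form $2^{n-1}(2^n + n - 3)$. This argument is entirely a specialization, so there is no genuine obstacle once Theorem \ref{m2nd-Mersen} is in hand: the only point requiring care is the genus bookkeeping, where one must spot the cancellation $2^n - 2 = 2(2^{n-1} - 1)$ against the $\frac{1}{2}$ and then verify that the remaining terms collect cleanly. The only conceptual step is the identification with $S(n)$ through the redundant generator $2^{2n} - 1$, which is already foreshadowed in the paragraph preceding the corollary.
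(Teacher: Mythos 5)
Your proposal is correct and follows essentially the same route as the paper: the corollary is obtained by direct substitution of $m=d=1$ into Theorem \ref{m2nd-Mersen}, together with the observation (made in the paragraph preceding the corollary) that the last generator $2^{2n}-1=(2^n+1)(2^n-1)$ is redundant, so $\langle A\rangle$ is the Mersenne semigroup $S(n)$. Your algebraic simplifications of both the Frobenius number and the genus check out, matching $2^{2n}-2^n-1$ and $2^{n-1}(2^n+n-3)$ exactly.
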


\begin{thm}\label{gener-repunit-type}
For $A=\left(a,ba+d,b^2a+\frac{b^2-1}{b-1}d,\ldots,b^ka+\frac{b^k-1}{b-1}d\right)$, assume $k=n$, $a=\frac{m(b^n-1)}{b-1}$, $b,n\geq 2$, $m\geq 1$, $d\in \mathbb{P}$, and $\gcd(a,d)=1$. Then $t(A)=n-1$. Furthermore,
$$PF(A)=\{F(A), F(A)-d,\ldots,F(A)-(n-2)d\}.$$
\end{thm}
\begin{proof}
Let $X(r)=(x_1,x_2,\ldots,x_{n})$ be a greedy presentation of $r$, $0\leq r\leq a-1$.
By Equation \eqref{Ndr-pre}, we have two representations of $N_{dr}$:
\begin{align}
N_{dr}&=\left(\sum_{i=1}^{n}b^ix_i\right)a+\sum_{i=1}^{n}\frac{b^i-1}{b-1}x_id\label{orderrelation}
\\&=\left(\sum_{i=1}^{n}b^ix_i\right)a+\sum_{i=1}^{n}\frac{b^ix_i}{b-1}d
-\sum_{i=1}^{n}\frac{x_i}{b-1}d.\label{orderrelation2}
\end{align}

By Equation \eqref{Ra-1repun}, the set $R(a-1)$ consists of all greedy presentations of $0\leq r\leq a-1$. We recall that the order relation $a\preceq_{\langle A\rangle} b$ if and only if $b-a \in \langle A\rangle$. In the Ap\'ery set
$Ape(A,a)=\{N_{d0},N_{d1},\ldots,N_{d(a-1)}\}$, by comparing the $x_i$'s in Equation \eqref{orderrelation}, we see that
if $N_{dr}$ is maximal (under $\preceq_{\langle A\rangle}$) in $Ape(A,a)$, then its corresponding $(x_1,\dots, x_{n})$ has to be one of the following greedy presentations:
$$(b,b-1,\ldots,b-1,m-1),\ (0,b,b-1,\ldots,b-1,m-1),\ (0,0,b,\ldots,b-1,m-1),\ \ldots ,\ (0,0,0,\ldots,b,m-1).$$

For the above candidates, we always have
$$\sum_{i=1}^{n}b^ix_i=mb^n.$$
For each candidate $X=(x_1,\ldots,x_{n})\in R(a-1)$ associated to $N_{dr}$, to show that $N_{dr}$ is maximal, it suffices to show that there is no candidate
$X^{\prime}=(x_1^{\prime},\ldots,x_{n}^{\prime})\in R(a-1)$ associated to $N_{dr'}$ such that $0\neq N_{dr'}-N_{dr} \in \langle A\rangle$.

Assume to the contrary the existence of the $X^\prime$. By Equation \eqref{orderrelation2}, it follows that
$$N_{dr'}-N_{dr}=\sum_{i=1}^{n}\frac{x_i}{b-1}d-\sum_{i=1}^{n}\frac{x_i'}{b-1}d=td,\ \ t=1,2,\ldots,n-2.$$
Then $td\in \langle A\rangle$, so that there exist not all zero nonnegative $(y_0,y_1,\ldots,y_{n})$ such that
$$td=y_0a+y_1(ba+d)+\cdots +y_{n}\left(b^{n}a+\frac{b^{n}-1}{b-1}d\right).$$
Then we have
$$(n-2)d\geq \left(t-y_1-y_2\frac{b^2-1}{b-1}-\cdots -y_{n}\frac{b^{n}-1}{b-1}\right)d=(y_0+by_1+\cdots +b^{n}y_{n})a>0.$$
By $\gcd(a,d)=1$, $a$ has to divide $t-y_1-y_2\frac{b^2-1}{b-1}-\cdots -y_{n}\frac{b^{n}-1}{b-1}\leq t \leq n-2<a$. Thus
 $(y_0+by_1+\cdots +b^{n}y_{n})a=0$, a contradiction.

Therefore,
$$\max\nolimits_{\preceq_{\langle A\rangle}} Ape(A, a)=\{N_{dr} \mid r\in a-\{1,2,\ldots,n-1\}\}.$$
By Lemma \ref{Pseudo-FP-Prop}, we have
\begin{align*}
PF(A)&=\{ N_{dr}-a \mid r\in a-\{1,2,\ldots,n-1\}\}
\\&=\{F(A),F(A)-d,\ldots,F(A)-(n-2)d\}.
\end{align*}
This completes the proof.
\end{proof}

Similarly, we can obtain the following corollary.
\begin{cor}{\em \cite[Theorem 23]{Rosales.Repunit}}\label{Fsotyperep}
Let $n,b\geq 2$, and set $m=d=1$ in Theorem \ref{m2nd-repunit}. Then $S(b,n)=\langle A\rangle$ is the repunit numerical semigroup. We have $t(S(b,n))=n-1$ and
\begin{align*}
PF(S(b,n))=\{F(S(b,n)),F(S(b,n))-1,\ldots,F(S(b,n))-(n-2)\}.
\end{align*}
\end{cor}

\begin{cor}{\em \cite[Theorem 18]{Rosales2016}}
Let $n\geq 2$, $b=2$ in Corollary \ref{Fsotyperep}. Then $S(n)=\langle A\rangle$ is the Mersenne numerical semigroup. We have $t(S(n))=n-1$ and
\begin{align*}
PF(S(n))=\{F(S(n)),F(S(n))-1,\ldots,F(S(n))-(n-2)\}.
\end{align*}
\end{cor}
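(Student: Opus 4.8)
The plan is to obtain this corollary as the special case $m=d=1$ of Theorem~\ref{gener-mers-type}; since that theorem already delivers the type and the pseudo-Frobenius set for all admissible $m,n,d$, the only genuine work is to check that with $m=d=1$ the semigroup $\langle A\rangle$ really coincides with the Mersenne numerical semigroup $S(n)$ of \cite{Rosales2016}, after which the two assertions transfer verbatim.

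First I would substitute $m=d=1$ into the generators. The $i$-th nonzero generator becomes
$$b_i = 2^i(2^n-1) + (2^i-1) = 2^{n+i}-1, \qquad 1\le i\le n,$$
while $a = 2^n-1 = 2^{n+0}-1$, so that $A = (2^n-1, 2^{n+1}-1, \ldots, 2^{2n}-1)$. Comparing with the minimal system of generators $S(n) = (2^n-1, 2^{n+1}-1, \ldots, 2^{2n-1}-1)$ recalled in the introduction, the two lists agree except for the final entry $2^{2n}-1$. As noted in the remark preceding the corollary, $2^{2n}-1 = (2^n+1)(2^n-1) = (2^n+1)a \in \langle a\rangle \subseteq \langle A\rangle$, so this generator is redundant and may be discarded without changing the semigroup. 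Hence $\langle A\rangle = \langle 2^n-1, 2^{n+1}-1, \ldots, 2^{2n-1}-1\rangle = S(n)$, which is the desired identification.

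With this in place, the hypotheses of Theorem~\ref{gener-mers-type} are satisfied, because $n\ge 2$ and $\gcd\!\big(m(2^n-1),d\big) = \gcd(2^n-1,1) = 1$. That theorem gives $t(A) = n-1$ at once, hence $t(S(n)) = n-1$, and it gives $PF(A) = \{F(A), F(A)-d, \ldots, F(A)-(n-2)d\}$; putting $d=1$ turns this arithmetic progression into consecutive integers, yielding $PF(S(n)) = \{F(S(n)), F(S(n))-1, \ldots, F(S(n))-(n-2)\}$, exactly as stated.

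I do not expect a real obstacle, since the substantive computation was done inside Theorem~\ref{gener-mers-type}. The one point deserving a sentence of care is the drop in embedding dimension: Theorem~\ref{gener-mers-type} is stated with $e(A)\le n+1$, whereas $e(S(n)) = n$. This is precisely accounted for by the redundancy of $2^{2n}-1$ established above, and since $F$, $g$, $t$, and $PF$ are invariants of the semigroup $\langle A\rangle$ itself rather than of the chosen generating list, discarding the redundant generator leaves all four quantities untouched.
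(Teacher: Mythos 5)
Your proposal is correct and matches the paper's approach: the paper derives this corollary purely by specializing Theorem \ref{gener-mers-type} to $m=d=1$, using the same observation (made in the remark after Theorem \ref{m2nd-Mersen}) that $2^{2n}-1=(2^n+1)a$ is a redundant generator, so $\langle A\rangle=S(n)$. Your write-up is, if anything, slightly more explicit about why discarding the redundant generator leaves $t$ and $PF$ unchanged.
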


\section{A Generalization of Thabit Numerical Semigroups}

In this section, we focus on Collection \texttt{GCNS} in the case $b_i=2^i-1$, $u=1$, $k=n+1$, $a=3\cdot 2^n-1$, $n\geq 1$, and $d\in \mathbb{P}$. That is, we consider the
numerical semigroup generated by
$$A=\big( 3\cdot 2^n-1,3\cdot 2^{n+1}-2+d, 3\cdot 2^{n+2}-4+3d,\ldots, 3\cdot 2^{2n+1}+2^{n+1}(d-1)-d \big).$$
This generalizes the Thabit numerical semigroup. We solve the Frobenius problem by giving closed formulas for the Frobenius number, genus, and pseudo-Frobenius numbers.

By Proposition \ref{greedproper}, $a-1=3\cdot 2^n-2=(2^{n+1}-1)+(2^n-1)$, and $2^{n+1}-2=2(2^n-1)$, we have
\begin{align}\label{Ra-1construct}
R(a-1)&=\Big\{(x_1,x_2,\ldots,x_{n+1}) \mid (x_n,x_{n+1})=(0,0),(1,0),(0,1);\notag
\\&\ \ \ 0\leq x_i\leq 2\ \ \text{for}\ \ 1\leq i\leq n-1;\ \
\text{if}\ \ x_i=2,\ \ \text{then}\ \  x_j=0\ \  \text{for}\ \ j\leq i-1\Big\}\notag
\\&\ \ \ \biguplus \Big\{(0,0,\ldots,0,x_{n},x_{n+1})\mid (x_n,x_{n+1})=(2,0),(1,1)\Big\}.
\end{align}
It can be verified that $\#R(a-1)=3\cdot 2^n-1=a$.

\begin{thm}\label{Thabit-32nd}
Let $A=\big( 3\cdot 2^n-1,3\cdot 2^{n+1}-2+d, 3\cdot 2^{n+2}-4+3d,\ldots, 3\cdot 2^{2n+1}-2^{n+1}+(2^{n+1}-1)d \big)$, where $n,d\in \mathbb{P}$, $n\geq 1$, and $\gcd(3\cdot 2^n-1,d)=1$. We have
\begin{align*}
&F(A)=9\cdot 2^{2n}+3(d-2)\cdot 2^n-2d+1,
\\&g(A)=9\cdot 2^{2n-1}+(3n-8)\cdot 2^{n-1}+(3\cdot 2^{n-1}-1)d+1.
\end{align*}
\end{thm}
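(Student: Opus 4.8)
The plan is to specialize Theorem \ref{a-2ad-22a3d} to the parameters $k=n+1$ and $a=3\cdot 2^n-1$. First I would verify the hypothesis $a+d\ge k$: since $3\cdot 2^n-1\ge n+1$ for every $n\ge 1$, we have $a+d\ge a\ge k$, so the theorem applies. To evaluate the Frobenius formula I need only the greedy presentation of $a-1=3\cdot 2^n-2$ and its digit sum. Using the identity $3\cdot 2^n-2=(2^{n+1}-1)+(2^n-1)$ recorded just before \eqref{Ra-1construct}, the greedy algorithm selects one copy of the largest generator $2^{n+1}-1$ and then one copy of $2^n-1$, so the greedy presentation is $(0,\dots,0,1,1)$ and $\left(\sum_{i=1}^{k}x_i\right)_{a-1}=2$. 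Substituting this into the formula of Theorem \ref{a-2ad-22a3d} gives $F(A)=(a+d-2+2)a-d=(a+d)a-d=a^2+ad-d$, and expanding $a=3\cdot 2^n-1$ yields the claimed $F(A)=9\cdot 2^{2n}+3(d-2)\cdot 2^n-2d+1$.

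For the genus the real content is the computation of $\sum_{r=1}^{a-1}\left(\sum_{i=1}^{k}x_i\right)_r$, the total digit sum over $R(a-1)$. I would read the structure directly from \eqref{Ra-1construct}: the set decomposes into three copies of the prefix set $\{(x_1,\dots,x_{n-1})\}$, carried respectively by $(x_n,x_{n+1})\in\{(0,0),(1,0),(0,1)\}$ and hence adding extra digit sums $0,1,1$, together with the two exceptional presentations $(0,\dots,0,2,0)$ and $(0,\dots,0,1,1)$, each of digit sum $2$. Writing $S_{\mathrm{pre}}$ for the total digit sum over the prefix set and $C_{\mathrm{pre}}=2^n-1$ for its cardinality, this produces $3S_{\mathrm{pre}}+2C_{\mathrm{pre}}+4$. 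The crucial point is that this prefix set is exactly the one analyzed in the proof of Theorem \ref{m2nd-Mersen}, where the bracketed sum was shown to equal $S_{\mathrm{pre}}=2^{n-1}n-1$; reusing that value gives the total $3(2^{n-1}n-1)+2(2^n-1)+4=3n\cdot 2^{n-1}+2^{n+1}-1$.

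Finally I would substitute this total, along with $\tfrac{(a-1)(a+d-1)}{2}$ for $a=3\cdot 2^n-1$, into the genus formula of Theorem \ref{a-2ad-22a3d}, then collect the coefficients of $2^{2n-1}$, of $2^{n-1}$, and the constant term to arrive at $g(A)=9\cdot 2^{2n-1}+(3n-8)\cdot 2^{n-1}+(3\cdot 2^{n-1}-1)d+1$. I expect the main obstacle to be the digit-sum accounting over $R(a-1)$ rather than the closing algebra: the key step is recognizing that the three-fold-plus-two decomposition in \eqref{Ra-1construct} reduces the entire sum to the already-established prefix total $S_{\mathrm{pre}}=2^{n-1}n-1$, so that $S_{\mathrm{pre}}$ need only be reused and not recomputed. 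A sanity check that $\#R(a-1)=3C_{\mathrm{pre}}+2=3(2^n-1)+2=a$ confirms the decomposition is complete before the weighted count is carried out.
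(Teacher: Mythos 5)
Your proposal is correct and follows essentially the same route as the paper: specialize Theorem \ref{a-2ad-22a3d}, read the greedy presentation of $a-1$ from $3\cdot 2^n-2=(2^{n+1}-1)+(2^n-1)$ for $F(A)$, and sum the digit sums over the decomposition \eqref{Ra-1construct} for $g(A)$. The only difference is organizational — you reuse the prefix total $2^{n-1}n-1$ from the Mersenne computation where the paper re-expands the binomial sums — and both yield the same intermediate value $3n\cdot 2^{n-1}+2^{n+1}-1$.
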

\begin{proof}
By Corollary \ref{HaHeLiuCor} along with $a-1=(2^{n+1}-1)+(2^n-1)$, we obtain
\begin{align*}
F(A)&=(a+d-2+2)a-d=9\cdot 2^{2n}+3(d-2)\cdot 2^n-2d+1.
\end{align*}

Now we consider the genus $g(A)$. Based on the composition of $R(a-1)$ in Equation \eqref{Ra-1construct} and Corollary \ref{HaHeLiuCor}, we have
\begin{align*}
g(A)&=\sum_{r=1}^{a-1}\left(\sum_{i=1}^{n+1}x_i\right)_r+\frac{(a-1)(a+d-1)}{2}
\\&=4+3\left(\sum_{i=0}^{n-1}i\binom{n-1}{i}
+\sum_{j=0}^{n-1}\left(\sum_{i=1}^{n-j-1}(i+2)\binom{n-j-1}{i}\right)\right)
\\&\ \ +2\left(\sum_{i=1}^{n-1}\binom{n-1}{i}+\sum_{j=1}^{n-1}\sum_{i=0}^{n-j-1}\binom{n-j-1}{i}+1\right)
+\frac{(a-1)(a+d-1)}{2}
\\&=6+3\left(2^{n-2}(n-1)+2^{n-2}(n-3)+1+2^n-2\right)
\\&\ \ +2\left(2^{n-1}-1+\sum_{j=1}^{n-1}2^{n-j-1}\right)
+\frac{(a-1)(a+d-1)}{2}
\\&=6+3(2^{n-1}\cdot n-1)+2^n-2+2^n-2+(3\cdot 2^{n-1}-1)(3\cdot 2^n-2+d)
\\&=9\cdot 2^{2n-1}+(3n-8)\cdot 2^{n-1}+(3\cdot 2^{n-1}-1)d+1.
\end{align*}
This completes the proof.
\end{proof}

Setting $d=1$ in Theorem \ref{Thabit-32nd} gives the following corollary.

\begin{cor}{\em \cite[Corollary 20, Theorem 29]{Rosales2015}}
Let $d=1$ in Theorem \ref{Thabit-32nd}, such that $T(n)=\langle A\rangle$ is the Thabit numerical semigroup in \cite{Rosales2015}. Then we have
\begin{align*}
&F(T(n))=9\cdot 2^{2n}-3\cdot 2^n-1,
\\&g(T(n))=9\cdot 2^{2n-1}+(3n-5)2^{n-1}.
\end{align*}
\end{cor}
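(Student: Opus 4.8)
The plan is to derive this corollary directly by specializing the formulas of Theorem \ref{Thabit-32nd} to $d=1$, after first confirming that $\langle A\rangle$ coincides with the Thabit numerical semigroup $T(n)$ of \cite{Rosales2015}.

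First I would verify the identification $\langle A\rangle = T(n)$. With $d=1$ and $a=3\cdot 2^n-1$, the $i$-th generator simplifies to
\begin{align*}
b_i = 2^i a + (2^i-1)d = 2^i(3\cdot 2^n-1) + (2^i-1) = 3\cdot 2^{n+i} - 1.
\end{align*}
Together with $a = 3\cdot 2^{n}-1$, the generating set is exactly $\{3\cdot 2^{n+i}-1 \mid 0\le i\le n+1\}$, i.e.\ the Thabit numbers, so that $\langle A\rangle = T(n)$. I would also note that the hypothesis $\gcd(3\cdot 2^n-1,d)=1$ of Theorem \ref{Thabit-32nd} holds automatically when $d=1$, so the theorem indeed applies.

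Next I would substitute $d=1$ into the Frobenius formula of Theorem \ref{Thabit-32nd}, obtaining
\begin{align*}
F(A) = 9\cdot 2^{2n} + 3(1-2)\cdot 2^n - 2\cdot 1 + 1 = 9\cdot 2^{2n} - 3\cdot 2^n - 1 = F(T(n)).
\end{align*}
For the genus, substituting $d=1$ into $g(A)=9\cdot 2^{2n-1}+(3n-8)\cdot 2^{n-1}+(3\cdot 2^{n-1}-1)d+1$ and using the collapse $(3\cdot 2^{n-1}-1)\cdot 1+1 = 3\cdot 2^{n-1}$ merges the last two terms into the coefficient of $2^{n-1}$, turning $3n-8$ into $3n-5$ and yielding $g(T(n)) = 9\cdot 2^{2n-1}+(3n-5)\cdot 2^{n-1}$.

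Both formulas are immediate specializations, so there is no genuine computational obstacle; the only point requiring care is the identification step, namely checking that the $d=1$ generators are precisely the Thabit numbers and that this agrees with the minimal system of generators used in \cite{Rosales2015}, so that $\langle A\rangle$ really is $T(n)$. As in the Mersenne case discussed just above, one should observe that the embedding dimension may drop at $d=1$ without affecting the semigroup itself.
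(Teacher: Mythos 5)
Your proposal is correct and matches the paper's approach exactly: the paper treats this corollary as an immediate specialization of Theorem \ref{Thabit-32nd} at $d=1$, and your substitutions $3(1-2)\cdot 2^n-2+1=-3\cdot 2^n-1$ and $(3n-8)+3=3n-5$ reproduce its formulas. Your explicit check that the generators become the Thabit numbers $3\cdot 2^{n+i}-1$ is a welcome detail the paper leaves implicit.
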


Let $X(r)=(x_1,x_2,\ldots,x_{n+1})$ be a greedy presentation of $r$, $0\leq r\leq a-1$.
According to Equation \eqref{Ndr-pre}, we obtain
\begin{align}\label{equaNdrT}
N_{dr}&=\left(\sum_{i=1}^{n+1}2^ix_i\right)a+\sum_{i=1}^{n+1}(2^i-1)x_id=\left(\sum_{i=1}^{n+1}2^ix_i\right)(a+d)-\sum_{i=1}^{n+1}x_id.
\end{align}

Under the order relation $\preceq_{\langle A\rangle}$ and Equation \eqref{Ra-1construct}, we know that the greedy presentations of the maximal elements in $Ape(A,a)$ are contained in the following elements:
\begin{align*}
&(2,1,1,\ldots,1,1,0),\ (0,2,1,\ldots,1,1,0),\ (0,0,2,\ldots,1,1,0),\ \ldots,\ (0,0,0,\ldots,2,1,0)
\\&(2,1,1,\ldots,1,0,1),\ (0,2,1,\ldots,1,0,1),\ (0,0,2,\ldots,1,0,1),\ \ldots,\ (0,0,0,\ldots,2,0,1)
\\&(0,0,0,\ldots,0,2,0),\ (0,0,0,\ldots,0,1,1).
\end{align*}
For the greedy presentations of the first two lines above, we always have $\sum_{i=1}^{n-1}2^ix_i=2^n$.
By Equation \eqref{equaNdrT}, we get
\begin{align*}
&(2^n+2^{n+1})(a+d)-(t+1)d-((2^n+2^n)(a+d)-td)
\\=&2^n(a+d)-d=2^na+(2^n-1)d\in \langle A\rangle,
\end{align*}
where $2\leq t\leq n$. Therefore, we have
\begin{align*}
\max\nolimits_{\preceq_{\langle A\rangle}} Ape(A, a)\subseteq\Big\{(2^n+2^{n+1})(a+d)-\{2d,3d,\ldots,(n+1)d\}\Big\}\biguplus \{2^{n+1}(a+d)-(n+1)d\}.
\end{align*}

Similar to the proof of Theorem \ref{gener-repunit-type}, we  obtain $s\cdot d\notin \langle A\rangle$, $1\leq s\leq (n-1)$. Furthermore, we have
$$\Big\{(2^n+2^{n+1})(a+d)-\{2d,3d,\ldots,(n+1)d\}\Big\}\subseteq \max\nolimits_{\preceq_{\langle A\rangle}} Ape(A,a).$$

\begin{thm}\label{maxmial-Ape-Thab}
Let $A=\big( 3\cdot 2^n-1,3\cdot 2^{n+1}-2+d, 3\cdot 2^{n+2}-4+3d,\ldots, 3\cdot 2^{2n+1}-2^{n+1}+(2^{n+1}-1)d \big)$, where $n,d\in \mathbb{P}$, $n\geq 2$, and $\gcd(3\cdot 2^n-1,d)=1$. Then
\begin{align*}
\max\nolimits_{\preceq_{\langle A\rangle}} Ape(A, a)=\Big\{(2^n+2^{n+1})(a+d)-\{2d,3d,\ldots,(n+1)d\}\Big\}\biguplus \{2^{n+1}(a+d)-(n+1)d\},
\end{align*}
where $a=3\cdot 2^n-1$.
\end{thm}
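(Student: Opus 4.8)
The plan is to build on the two containments already assembled in the paragraphs immediately preceding the statement. There it is shown that
$$\max\nolimits_{\preceq_{\langle A\rangle}}Ape(A,a)\subseteq\Big\{(2^n+2^{n+1})(a+d)-\{2d,\ldots,(n+1)d\}\Big\}\bigcup\{2^{n+1}(a+d)-(n+1)d\}=:C,$$
and, using $s\cdot d\notin\langle A\rangle$ for $1\le s\le n-1$, that the first block $\{(2^n+2^{n+1})(a+d)-\{2d,\ldots,(n+1)d\}\}$ is already contained in the maximal set. Hence the whole proof reduces to verifying that the single extra element $s_0:=2^{n+1}(a+d)-(n+1)d$ is maximal; combining this with the two known containments then yields the claimed equality.

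To organize the maximality of $s_0$, I would first record the elementary poset fact that, the Apéry set being finite, every element lies $\preceq_{\langle A\rangle}$ below some maximal element; since all maximal elements lie in $C$, if $s_0$ were not maximal it would be strictly dominated by some $w\in C\setminus\{s_0\}$, and such a $w$ must satisfy $w-s_0\in\langle A\rangle\setminus\{0\}$, hence $w>s_0$. As every first-block element exceeds $s_0$ (their difference with $s_0$ is $2^n(a+d)+(n+1-j)d>0$) and these are the only elements of $C$ above $s_0$, it suffices to show that no first-block element dominates $s_0$. Computing the relevant differences,
$$\big[(2^n+2^{n+1})(a+d)-jd\big]-s_0=2^na+(2^n+n+1-j)d=g_n+(n+2-j)d,\qquad g_n:=2^na+(2^n-1)d,$$
and writing $s=n+2-j\in\{1,\ldots,n\}$ as $j$ runs over $\{2,\ldots,n+1\}$, the required non-domination is exactly the claim $g_n+sd\notin\langle A\rangle$ for $1\le s\le n$.

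The key step is therefore to prove $g_n+sd=2^na+(2^n-1+s)d\notin\langle A\rangle$ for all $1\le s\le n$. I would argue by contradiction: a representation in $\langle A\rangle$ forces $2^na+(2^n-1+s)d=Pa+Qd$, where $P=y_0+\sum_{i=1}^{n+1}2^iy_i$ and $Q=\sum_{i=1}^{n+1}(2^i-1)y_i$ with $y_i\in\mathbb{N}$, and crucially $P-Q=y_0+\sum_{i=1}^{n+1}y_i\ge 0$. Since $\gcd(a,d)=1$, one may set $P-2^n=dm$ and $2^n-1+s-Q=am$ for a common integer $m$, so that $Q=2^n-1+s-am$. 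The constraint $Q\ge 0$ gives $am\le 2^n-1+s\le 2^n-1+n$, and because $a=3\cdot 2^n-1>2^n-1+n$ (as $2^{n+1}>n$), no $m\ge 1$ is possible; while $m\le -1$ forces $Q\ge 2^n-1+s+a>2^n>P$, contradicting $P\ge Q$. Thus $m=0$, giving $P=2^n$ and $Q=2^n-1+s$, so $P-Q=1-s\le 0$; together with $P-Q\ge 0$ this forces $s=1$ and $y_0=\cdots=y_{n+1}=0$, contradicting $P=2^n\ne 0$.

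The hard part is precisely this last non-membership computation. Its subtlety is that $\langle A\rangle$ entangles the $a$- and $d$-contributions, so one cannot separately match the coefficients of $a$ and of $d$; the argument instead leans on the single structural inequality $P\ge Q$ coming from the nonnegativity of the generator multiplicities, together with the size of $a=3\cdot2^n-1$, to pin down $m=0$ and then derive a contradiction. Once $g_n+sd\notin\langle A\rangle$ is established for $1\le s\le n$, the element $s_0$ is dominated by no element of $C$ and is therefore maximal, which completes the reverse inclusion and hence the stated equality.
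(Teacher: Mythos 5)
Your proposal is correct, and it is worth noting that it supplies precisely the step the paper does not: the paper's entire proof of this theorem is the remark that one ``just needs'' $2^{n+1}(a+d)-(n+1)d\in\max_{\preceq_{\langle A\rangle}}Ape(A,a)$ and that this is ``similar to the proof of Theorem 24 in [Rosales--Branco--Torr\~ao]'', with the details omitted. Your reduction is the right one: since the preceding paragraphs give the containment of $\max_{\preceq_{\langle A\rangle}}Ape(A,a)$ in the candidate set $C$ and the maximality of the first block, and since in a finite Ap\'ery set every element lies below a maximal one, the only way $s_0=2^{n+1}(a+d)-(n+1)d$ could fail to be maximal is to be dominated by a first-block element, and the relevant differences are exactly $g_n+sd=2^na+(2^n-1+s)d$ for $1\le s\le n$. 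Your non-membership argument for these is complete and correct: writing a putative representation as $Pa+Qd$ with $P=y_0+\sum_i 2^iy_i$ and $Q=\sum_i(2^i-1)y_i$, the structural inequality $P-Q=y_0+\sum_i y_i\ge 0$ together with $\gcd(a,d)=1$ and the size bound $a=3\cdot 2^n-1>2^n-1+n$ forces $m=0$, hence $P-Q=1-s\le 0$, and the case $s=1$ collapses to all $y_i=0$, contradicting $P=2^n$. This is a cleaner and more uniform argument than the case analysis one would extract from the cited source, and it has the additional merit of making the present paper self-contained; it is also consistent with (indeed generalizes) the paper's own technique in the proof of Theorem \ref{gener-mers-type}, where the same $P\ge Q$ trick is used to show $td\notin\langle A\rangle$.
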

\begin{proof}
It suffices to show that $\{2^{n+1}(a+d)-(n+1)d\}\subseteq \max\nolimits_{\preceq_{\langle A\rangle}} Ape(A, a)$.
This is similar to the proof of Theorem 24 in \cite{Rosales2015}. We omit it.
\end{proof}

By applying Theorem \ref{maxmial-Ape-Thab} and Lemma \ref{Pseudo-FP-Prop} we get the following result.
\begin{thm}\label{Pseudo-Thab-Haha}
Let $A=\big(3\cdot 2^n-1,3\cdot 2^{n+1}-2+d, 3\cdot 2^{n+2}-4+3d,\ldots, 3\cdot 2^{2n+1}-2^{n+1}+(2^{n+1}-1)d \big)$, where $n,d\in \mathbb{P}$, $n\geq 2$, and $\gcd(3\cdot 2^n-1,d)=1$. Then $t(A)=n+1$. Furthermore
$$PF(A)=\{F(A),F(A)-d,\ldots,F(A)-(n-1)d\}\biguplus \{6\cdot 2^{2n}+(2d-5)2^n-(n+1)d+1\}.$$
\end{thm}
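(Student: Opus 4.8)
The plan is to combine the explicit description of $\max_{\preceq_{\langle A\rangle}} Ape(A,a)$ from Theorem~\ref{maxmial-Ape-Thab} with Lemma~\ref{Pseudo-FP-Prop}, which states that $PF(A) = \{w - a \mid w \in \max_{\preceq_{\langle A\rangle}} Ape(A,a)\}$. Since Theorem~\ref{maxmial-Ape-Thab} identifies the maximal Ap\'ery set as consisting of $n$ elements of the form $(2^n + 2^{n+1})(a+d) - jd$ for $j \in \{2,3,\dots,n+1\}$ together with the single element $2^{n+1}(a+d) - (n+1)d$, the type is immediately $t(A) = \#PF(A) = n + 1$, provided we verify these $n+1$ elements are pairwise distinct. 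First I would subtract $a = 3\cdot 2^n - 1$ from each element of the maximal set.

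For the first family, I would compute $(2^n + 2^{n+1})(a+d) - jd - a$ for $j = 2,\dots,n+1$. Using $2^n + 2^{n+1} = 3\cdot 2^n = a+1$, the leading term is $(a+1)(a+d) - a = a^2 + ad + d$. Substituting $a = 3\cdot 2^n - 1$ and expanding, I expect this to match $F(A) = 9\cdot 2^{2n} + 3(d-2)2^n - 2d + 1$ precisely when $j = 2$ (since $F(A) = (a+d)a - d + 2a = \ldots$ from Theorem~\ref{Thabit-32nd}, where the greedy presentation of $a-1$ contributes the weight giving exactly the $j=2$ term). Thus the first family yields $F(A) - (j-2)d$ for $j = 2,\dots,n+1$, i.e.\ the set $\{F(A), F(A)-d, \dots, F(A)-(n-1)d\}$. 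This handles the main block of $PF(A)$ by a direct, careful substitution.

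For the isolated element, I would compute $2^{n+1}(a+d) - (n+1)d - a$ directly. Plugging $a = 3\cdot 2^n - 1$, the dominant term $2^{n+1}(a+d) = 2^{n+1}(3\cdot 2^n - 1 + d) = 6\cdot 2^{2n} + (2d-2)2^n$, and subtracting $(n+1)d$ and $a = 3\cdot 2^n - 1$ gives $6\cdot 2^{2n} + (2d-5)2^n - (n+1)d + 1$, matching the claimed extra pseudo-Frobenius number. I would then verify this value does not coincide with any element of the first family; since the first family has leading coefficient $9\cdot 2^{2n}$ while the isolated element has leading coefficient $6\cdot 2^{2n}$, they differ for all large enough $n$, and the hypothesis $n \geq 2$ together with the strict inequality $6 < 9$ on the top-order term makes distinctness automatic. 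This confirms $t(A) = n+1$.

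The main obstacle I anticipate is not conceptual but bookkeeping: correctly matching the abstract weight expressions $(2^n + 2^{n+1})(a+d) - jd$ to the closed form of $F(A)$ requires carefully tracking which greedy presentation in $R(a-1)$ realizes the Frobenius number, and ensuring the index shift $j \mapsto j-2$ aligns the endpoints so that $j=2$ gives exactly $F(A)$ rather than being off by a multiple of $d$ or a factor of $a$. A secondary subtlety is that Theorem~\ref{maxmial-Ape-Thab} and hence this result require $n \geq 2$; I would note that the degenerate behavior at $n = 1$ (where some families collapse) is excluded by hypothesis, so no separate case analysis is needed. Once the substitutions are verified, the statement follows immediately from Lemma~\ref{Pseudo-FP-Prop} by translating each maximal element down by $a$.
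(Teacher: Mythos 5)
Your proposal is correct and follows exactly the paper's route: the paper obtains this theorem directly from Theorem~\ref{maxmial-Ape-Thab} and Lemma~\ref{Pseudo-FP-Prop}, and your substitutions (using $2^n+2^{n+1}=a+1$ so that the $j=2$ term becomes $a^2+ad-d=F(A)$, and the isolated element becomes $6\cdot 2^{2n}+(2d-5)2^n-(n+1)d+1$) are precisely the computation the paper leaves implicit. The only slight imprecision is your distinctness argument for the isolated element, which should rest on the exact difference $3\cdot 2^{2n}+(d-1)2^n+(n-1-t)d>0$ for $0\leq t\leq n-1$ rather than on comparing top-order coefficients alone (since $d$ may be arbitrarily large), but this is immediate.
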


\begin{cor}{\em \cite[Corollary 25]{Rosales2015}}
Let $d=1$ in Theorem \ref{Pseudo-Thab-Haha}. Then $T(n)=\langle A\rangle$ is the Thabit numerical semigroup in \cite{Rosales2015}. We have $t(T(n))=n+1$ and
$$PF(A)=\{F(A),F(A)-1,\ldots,F(A)-(n-1)\}\biguplus \{6\cdot 2^{2n}-3\cdot 2^n-n\}.$$
\end{cor}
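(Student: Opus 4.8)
The plan is to read off $PF(A)$ directly from Lemma \ref{Pseudo-FP-Prop}, which gives $PF(A)=\{w-a\mid w\in\max_{\preceq_{\langle A\rangle}}Ape(A,a)\}$, after inserting the explicit list of maximal elements supplied by Theorem \ref{maxmial-Ape-Thab}. Since that theorem (valid for $n\ge 2$) already pins down
$$\max\nolimits_{\preceq_{\langle A\rangle}}Ape(A,a)=\Big\{(2^n+2^{n+1})(a+d)-\{2d,3d,\dots,(n+1)d\}\Big\}\bigcup\{2^{n+1}(a+d)-(n+1)d\}$$
with $a=3\cdot 2^n-1$, essentially all the structural work is already done; what remains is bookkeeping.

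First I would settle the type. The first family is indexed by the $n$ distinct values $2d,3d,\dots,(n+1)d$, and the singleton contributes one more element. To conclude $\#\max_{\preceq_{\langle A\rangle}}Ape(A,a)=n+1$ I would check that the singleton is not already in the first family: its leading coefficient of $2^{2n}$ is $6$ (coming from $2^{n+1}(a+d)$) whereas every element of the first family has leading coefficient $9$ (coming from $3\cdot 2^n(a+d)=(2^n+2^{n+1})(a+d)$), so they cannot coincide. Hence $t(A)=\#PF(A)=n+1$.

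Next I would subtract $a$ from each maximal $w$. Using $2^n+2^{n+1}=3\cdot 2^n$ and $a+d=3\cdot 2^n-1+d$, a short expansion gives $(2^n+2^{n+1})(a+d)-2d-a=9\cdot 2^{2n}+3(d-2)2^n-2d+1$, which is exactly $F(A)$ as computed in Theorem \ref{Thabit-32nd}; this is consistent with $F(A)$ being $\max_r N_r-a$, the largest maximal element minus $a$. Replacing the subtracted term $2d$ by $(j+1)d$ for $j=1,\dots,n$ lowers the value in uniform steps of $d$, producing the progression $F(A),F(A)-d,\dots,F(A)-(n-1)d$. Subtracting $a$ from the exceptional element $2^{n+1}(a+d)-(n+1)d$ and expanding yields $6\cdot 2^{2n}+(2d-5)2^n-(n+1)d+1$, the last listed member of $PF(A)$.

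The genuinely hard content has already been absorbed into Theorem \ref{maxmial-Ape-Thab} (notably the claim, established there by analogy with Theorem 24 of \cite{Rosales2015}, that the presentation $(0,\dots,0,2,0)$ is maximal and that no short multiple $sd$ with $1\le s\le n-1$ lies in $\langle A\rangle$). For the present theorem the only delicate points are the two distinctness checks: that the exceptional $w-a$ value is not one of the progression terms, and that the progression terms are pairwise distinct. The former follows from the $2^{2n}$-coefficient comparison already noted (the smallest progression term $F(A)-(n-1)d$ exceeds the exceptional value by $3\cdot 2^{2n}+(d-1)2^n>0$), and the latter is immediate since consecutive terms differ by $d\ge 1$. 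Thus $PF(A)$ is the claimed disjoint union and $t(A)=n+1$.
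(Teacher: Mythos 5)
Your proposal is correct and follows essentially the same route as the paper: the paper obtains Theorem \ref{Pseudo-Thab-Haha} by combining Theorem \ref{maxmial-Ape-Thab} with Lemma \ref{Pseudo-FP-Prop} (subtracting $a$ from each maximal Ap\'ery element) and then states this corollary as the specialization $d=1$, which is exactly your computation. Your arithmetic checks out, including the identification of the largest term with $F(A)$ from Theorem \ref{Thabit-32nd} and the distinctness of the exceptional element $6\cdot 2^{2n}+(2d-5)2^n-(n+1)d+1$, which becomes $6\cdot 2^{2n}-3\cdot 2^n-n$ at $d=1$.
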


\section{Proth Numerical Semigroups}\label{SecPorthNS}

Now, we consider the Proth numerical semigroup $P_m(n)=\{m2^{n+i}+1\mid i\in\mathbb{N}\}$, where $m$ is an odd positive number and $m<2^n$. So we assume that $2^r<m<2^{r+1}$ for some $r\in \mathbb{N}$. In \cite{P.Srivastava}, Srivastava and Thakkar obtain the embedding dimension $e(P_m(n))=n+r+1$. Therefore, we have $P_m(n)=\langle A\rangle$, where
\begin{align*}
A&=(m2^n+1,m2^{n+1}+1,m2^{n+2}+1,\ldots,m2^{2n+r}+1)
\\&=(m2^n+1,2(m2^n+1)-1,2^2(m2^n+1)-3,\ldots,2^{n+r}(m2^n+1)-(2^{n+r}-1)).
\end{align*}

Srivastava and Thakkar derive the Frobenius number for the case $m=2^r+1$. Now we use Theorem \ref{HBuadNDr} to calculate $F(P_{2^r+1}(n))$. Note that we cannot directly use the result of Corollary \ref{HaHeLiuCor} to calculate the Frobenius number.
\begin{prop}{\em \cite[Theorem 6]{P.Srivastava}}\label{PropPSrivastava}
Let $n>2$. We obtain
$$F(P_{2^r+1}(n))=4^{n+r}+4^n+2^{2n+r+1}+3\cdot 2^{n+r}+3\cdot 2^n+3.$$
\end{prop}
\begin{proof}
In Collection \texttt{CNS}, we set $a=(2^r+1)2^n+1$, $b=2$, $d=-1$, and $k=n+r$. This is the numerical semigroup $P_{2^r+1}(n)$. Now we consider the greedy presentation of $a-1=(2^r+1)2^n$.
By Proposition \ref{greedproper} and $b_i=2^i-1$ for $1\leq i\leq n+r$, it follows that
$$x_{n+r}=\left\lfloor \frac{2^{n+r}+2^n}{2^{n+r}-1}\right\rfloor=1,\ \ x_n=\left\lfloor \frac{a-1-x_{n+r}b_{n+r}}{2^n-1}\right\rfloor=1,\ \ \text{and}\ \ x_1=2.$$
Thus, we have
$a-1=x_1\cdot 1+x_n(2^n-1)+x_{n+r}(2^{n+r}-1)$. By Theorem \ref{HBuadNDr}, we obtain
$$N_{d(a-1)}-N_{d(a-2)}=(w((2^r+1)2^n)-w((2^r+1)2^n-1))a-1=2a-1.$$
By Lemma \ref{colex-incre}, we get $w(a-1)\geq w(a-2)\geq \cdots \geq w(1)$. Hence, we have
$N_{d(i+1)}-N_{di}\geq -1$ for $1\leq i\leq a-2$. We can easily obtain $N_{d(a-1)}\geq N_{dj}$ for any $1\leq j\leq a-2$. By Lemma \ref{LiuXin001} and Theorem \ref{HBuadNDr}, we conclude that
$$F(P_{2^r+1}(n))=N_{d(a-1)}-a=4a+(a-1)(a-1)-a=4^{n+r}+4^n+2^{2n+r+1}+3\cdot 2^{n+r}+3\cdot 2^n+3.$$
This completes the proof.
\end{proof}

Srivastava and Thakkar also obtain the pseudo-Frobenius numbers $PF(P_{2^r+1}(n))$, the type $t(P_{2^r+1}(n))$, and an inequality of the genus $g(P_{2^r+1}(n))$ in \cite{P.Srivastava}. Note that they did not obtain a formula for the genus. Now we consider the Open Problem \ref{OpenQuestProthFP}. We first give a characterization of Ap\'ery set for $P_m(n)$.

\begin{thm}
Let $2^r<m<2^{r+1}$ for some $r\in \mathbb{N}$, where $m$ is an odd positive number, and $m<2^n$. Let $n>2$ be a positive integer. Then the Ap\'ery set of $P_m(n)$ is
\begin{align*}
\{N_{-j}\mid 0\leq j\leq m2^n\}=\{0,N_{-1},N_{-2},\ldots,N_{-m2^n}\}.
\end{align*}
If $X(j)=(x_1,x_2,\ldots,x_{n+r})$ is the greedy presentation of $j$ in Proposition \ref{greedproper}, then
$$N_{-j}=\left(\sum_{i=1}^{n+r}x_i \right)(m2^n+1)+jm2^n=w(j)\cdot (m2^n+1)-j.$$
where $w(j)=\sum_{i=1}^k2^ix_i$ is the weight of $j$.
\end{thm}
\begin{proof}
Srivastava and Thakkar obtain the embedding dimension $e(P_m(n))=n+r+1$ in \cite{P.Srivastava}. Therefore, we can set $k=n+r$ in Theorem \ref{HBuadNDr}. Furthermore, we set $a=m2^n+1$, $d=-1$, $h_i=2^i$, and $b_i=2^i-1$ for $1\leq i\leq n+r$ in Theorem \ref{HBuadNDr}. Thus, we get $u=1$, $s_1=s_2=\cdots =s_{k-1}=2$. The restriction in Theorem \ref{HBuadNDr} holds. We have
$$N_{-j}=\left(\sum_{i=1}^{n+r}x_i\right)a+j(ua+d)=\left(\sum_{i=1}^{n+r}x_i\right)(m2^n+1)+jm2^n.$$
Similarly, by Theorem \ref{HBuadNDr}, we also obtain
$N_{-j}=w(j)\cdot (m2^n+1)-j$.
This completes the proof.
\end{proof}

We can also obtain the formulas for the genus of the Proth numerical semigroup $P_m(n)$.
\begin{thm}
Let $2^r<m<2^{r+1}$ for some $r\in \mathbb{N}$, where $m$ is an odd positive number and $m<2^n$. Let $n>2$ be a positive integer. Then we have
\begin{align*}
g(P_m(n))=\sum_{j=1}^{m2^n}\left(\sum_{i=1}^{n+r}x_i\right)_{j}+\frac{m2^n(m2^n-1)}{2},
\end{align*}
where $\left(\sum_{i=1}^{n+r}x_i\right)_j$ denotes the sum of elements in the greedy presentation of $j$.
\end{thm}
\begin{proof}
We set $k=n+r$, $a=m2^n+1$, $b=2$, and $d=-1$ in Corollary \ref{HaHeLiuCor111111}. We can easily verify the condition $a>k-1-\frac{d-1}{b-1}$ holds. Therefore, we have
\begin{align*}
g(P_m(n))=\sum_{j=1}^{m2^n}\left(\sum_{i=1}^{n+r}x_i\right)_{j}+\frac{m2^n(m2^n-1)}{2}.
\end{align*}
This completes the proof.
\end{proof}

We now consider the Frobenius number of $P_m(n)$.
\begin{thm}\label{PorthNSX12}
Let $q\in \mathbb{P}$ be an odd number and $r\in \mathbb{N}$. Let $m=2^r+q<2^{r+1}$ and $n\geq r+1$. If the greedy presentation of $(2^r+q)2^n$ is $(x_1,x_2,\ldots,x_{n+r})$ and $x_1\neq 0$, then we have $$F(P_{2^r+q}(n))=\left(\sum_{i=1}^{n+r}x_i\right)((2^r+q)2^n+1)+((2^r+q)2^n)^2-(2^r+q)2^n-1.$$
\end{thm}
\begin{proof}
Since $q$ is an odd positive integer, $m$ is an odd number.
Given that $2^r<m=2^r+q<2^{r+1}$ and $n\geq r+1$, it follows that $m<2^n$. Therefore, $P_{2^r+q}(n)$ is a Proth numerical semigroup.
In Collection \texttt{CNS}, we set $a=(2^r+q)2^n+1$, $b=2$, $d=-1$, and $k=n+r$.
Because the greedy presentation of $a-1$ is $(x_1,x_2,\ldots,x_{n+r})$ and $x_1\neq 0$, we know $x_1\in\{1,2\}$ and the greedy presentation of $a-2$ is $(x_1-1,x_2,\ldots,x_{n+r})$.
By Theorem \ref{HBuadNDr} and the weight of $a-1$ is $w(a-1)=\sum_{i=1}^k(2^i-1+1)x_i$, we obtain
$$N_{d(a-1)}-N_{d(a-2)}=(w(a-1)-w(a-2))a-1=(2x_1-2(x_1-1))a-1=2a-1.$$
According to Lemma \ref{colex-incre}, we have $w(a-1)\geq w(a-2)\geq \cdots \geq w(1)$.
Therefore, by Theorem \ref{HBuadNDr}, it follows that $N_{d(i+1)}-N_{di}\geq -1$ for $1\leq i\leq a-2$.
Furthermore, we get
$$N_{d(a-1)}-N_{d(a-3)}\geq 2a-2,\ \ \ N_{d(a-1)}-N_{d(a-4)}\geq 2a-3,\ldots,\ \ \ N_{d(a-1)}-N_{d(1)}\geq a+2.$$
By Lemma \ref{LiuXin001} and Theorem \ref{HBuadNDr}, we conclude that
$$F(P_{2^r+q}(n))=N_{d(a-1)}-a=\sum_{i=1}^{n+r}x_ia+(a-1)(a-1)-a.$$
This completes the proof.
\end{proof}

In Theorem \ref{PorthNSX12}, we can obtain Proposition \ref{PropPSrivastava} by setting $q=1$.

\begin{thm}\label{PorthNSX001}
Let $q\in \mathbb{P}$ be an odd number and $r\in \mathbb{N}$. Let $m=2^r+q<2^{r+1}$ and $n\geq r+1$. If the greedy presentation of $(2^r+q)2^n$ is $(x_1,x_2,\ldots,x_{n+r})$ and $x_1=0$, $x_2\neq 0$, then we have
$$F(P_{2^r+q}(n))=\left(\sum_{i=1}^{n+r}x_i\right)((2^r+q)2^n+1)+((2^r+q)2^n-1)(2^r+q)2^n.$$
\end{thm}
\begin{proof}
Similar to the proof of Theorem \ref{PorthNSX12}, we set $a=(2^r+q)2^n+1$, $b=2$, $d=-1$, and $k=n+r$ in Collection \texttt{GCNS}. Because the greedy presentation of $a-1$ is $(0,x_2,\ldots,x_{n+r})$ and $x_2\in\{1,2\}$, the greedy presentation of $a-2$ is $(2,x_2-1,\ldots,x_{n+r})$, and the greedy presentation of $a-3$ is $(1,x_2-1,\ldots,x_{n+r})$.
Thus, we get
$$w(a-2)-w(a-1)=(3+1)(x_2-1)+(1+1)2-(3+1)x_2=0,\ \ w(a-2)-w(a-3)=2.$$
By Theorem \ref{HBuadNDr}, we obtain
$$N_{d(a-2)}-N_{d(a-1)}=(w(a-1)-w(a-2))a-d=1,\ \ N_{d(a-2)}-N_{d(a-3)}=2a-1.$$
by Theorem \ref{HBuadNDr}, it follows that
$N_{d(i+1)}-N_{di}\geq -1$ for $1\leq i\leq a-4$.
Therefore, we have
$$N_{d(a-2)}-N_{d(a-4)}\geq 2a-2,\ \ \ N_{d(a-2)}-N_{d(a-5)}\geq 2a-3,\ldots,\ \ \ N_{d(a-2)}-N_{d(1)}\geq a+3.$$
By Lemma \ref{LiuXin001} and Theorem \ref{HBuadNDr}, we obtain
$$F(P_{2^r+q}(n))=N_{d(a-2)}-a=\left(1+\sum_{i=1}^{n+r}x_i\right)a+(a-2)(a-1)-a.$$
This completes the proof.
\end{proof}

Theorem \ref{PorthNSX12} and Theorem \ref{PorthNSX001} do not cover all cases for an odd number $m$ (or an odd number $q$). For example, if $q=63$, then we have $x_1=x_2=0$ and $x_3=1$ in the greedy presentation of $(2^r+q)2^n$. However, our idea can be applied to a concrete $m$. For $q<50$, we obtain Table \ref{PorthFor50}, which provides the explicit formula. In Table \ref{PorthFor50}, the elements in $**$ are all 0.

\begin{exa}
If $q=3$, $r\geq 2$, and $n\geq 3$, then the sum of elements in the greedy presentation of $(2^r+q)2^n$ is $\sum_{i=1}^{n+r}x_i=4$ by Table \ref{PorthFor50}. By Theorem \ref{PorthNSX001}, we obtain
$$F(P_{2^r+3}(n))=4((2^r+3)2^n+1)+((2^r+3)2^n-1)(2^r+3)2^n.$$
Similarly, if $q=7$, $r\geq 3$ and $n\geq 4$, then we have
$$F(P_{2^r+7}(n))=6((2^r+7)2^n+1)+((2^r+7)2^n)^2-(2^r+7)2^n-1$$
by Theorem \ref{PorthNSX12}.
\end{exa}

\begin{tiny}
\begin{table}[htbp]
    	\centering
    	\caption{The greedy presentation of $(2^r+q)2^n$ for $q<50$.}\label{PorthFor50}
    	\begin{tabular}{c|c|c|c|c}
    		\hline \hline
Value of $q$ & Value of $r$ & Value of $n$ & $(x_1,x_2,x_3,**,x_n,x_{n+1},x_{n+2},x_{n+3},x_{n+4},x_{n+5},**,x_{n+r})$ for $a-1$ & Theorem    \\
    		\hline
$1$ & $r\geq 0$ & $n\geq 1$ & $(2,0,0,**,1,0,0,0,0,0,**,1)$ & \text{Theorem \ref{PorthNSX12}}   \\
    		\hline
$3$ & $r\geq 2$ & $n\geq 3$ & $(0,1,0,**,1,1,0,0,0,0,**,1)$ & \textcolor{blue}{Theorem \ref{PorthNSX001}}   \\
    		\hline
$5$ & $r\geq 3$ & $n\geq 4$ & $(0,1,0,**,1,0,1,0,0,0,**,1)$ & \textcolor{blue}{Theorem \ref{PorthNSX001}}   \\
    		\hline
$7$ & $r\geq 3$ & $n\geq 4$ & $(1,1,0,**,1,1,1,0,0,0,**,1)$ & \text{Theorem \ref{PorthNSX12}}   \\
    		\hline
$9$ & $r\geq 4$ & $n\geq 5$ & $(0,1,0,**,1,0,0,1,0,0,**,1)$ & \textcolor{blue}{Theorem \ref{PorthNSX001}}   \\
    		\hline
$11$ & $r\geq 4$ & $n\geq 5$ & $(1,1,0,**,1,1,0,1,0,0,**,1)$ & \text{Theorem \ref{PorthNSX12}}   \\
    		\hline
$13$ & $r\geq 4$ & $n\geq 5$ & $(1,1,0,**,1,0,1,1,0,0,**,1)$ & \text{Theorem \ref{PorthNSX12}}   \\
    		\hline
$15$ & $r\geq 4$ & $n\geq 5$ & $(2,1,0,**,1,1,1,1,0,0,**,1)$ & \text{Theorem \ref{PorthNSX12}}   \\
    		\hline
$17$ & $r\geq 5$ & $n\geq 6$ & $(0,1,0,**,1,0,0,0,1,0,**,1)$ & \textcolor{blue}{Theorem \ref{PorthNSX001}}   \\
    		\hline
$19$ & $r\geq 5$ & $n\geq 6$ & $(1,1,0,**,1,1,0,0,1,0,**,1)$ & \text{Theorem \ref{PorthNSX12}}   \\
    		\hline
$21$ & $r\geq 5$ & $n\geq 6$ & $(1,1,0,**,1,0,1,0,1,0,**,1)$ & \text{Theorem \ref{PorthNSX12}}   \\
    		\hline
$23$ & $r\geq 5$ & $n\geq 6$ & $(2,1,0,**,1,1,1,0,1,0,**,1)$ & \text{Theorem \ref{PorthNSX12}}   \\
    		\hline
$25$ & $r\geq 5$ & $n\geq 6$ & $(1,1,0,**,1,0,0,1,1,0,**,1)$ & \text{Theorem \ref{PorthNSX12}}   \\
    		\hline
$27$ & $r\geq 5$ & $n\geq 6$ & $(2,1,0,**,1,1,0,1,1,0,**,1)$ & \text{Theorem \ref{PorthNSX12}}   \\
    		\hline
$29$ & $r\geq 5$ & $n\geq 6$ & $(2,1,0,**,1,0,1,1,1,0,**,1)$ & \text{Theorem \ref{PorthNSX12}}   \\
    		\hline
$31$ & $r\geq 5$ & $n\geq 6$ & $(0,2,0,**,1,1,1,1,1,0,**,1)$ & \textcolor{blue}{Theorem \ref{PorthNSX001}}   \\
    		\hline
$33$ & $r\geq 6$ & $n\geq 7$ & $(0,1,0,**,1,0,0,0,0,1,**,1)$ & \textcolor{blue}{Theorem \ref{PorthNSX001}}   \\
    		\hline
$35$ & $r\geq 6$ & $n\geq 7$ & $(1,1,0,**,1,1,0,0,0,1,**,1)$ & \text{Theorem \ref{PorthNSX12}}   \\
    		\hline
$37$ & $r\geq 6$ & $n\geq 7$ & $(1,1,0,**,1,0,1,0,0,1,**,1)$ & \text{Theorem \ref{PorthNSX12}}   \\
    		\hline
$39$ & $r\geq 6$ & $n\geq 7$ & $(2,1,0,**,1,1,1,0,0,1,**,1)$ & \text{Theorem \ref{PorthNSX12}}   \\
    		\hline
$41$ & $r\geq 6$ & $n\geq 7$ & $(1,1,0,**,1,0,0,1,0,1,**,1)$ & \text{Theorem \ref{PorthNSX12}}   \\
    		\hline
$43$ & $r\geq 6$ & $n\geq 7$ & $(2,1,0,**,1,1,0,1,0,1,**,1)$ & \text{Theorem \ref{PorthNSX12}}   \\
    		\hline
$45$ & $r\geq 6$ & $n\geq 7$ & $(2,1,0,**,1,0,1,1,0,1,**,1)$ & \text{Theorem \ref{PorthNSX12}}   \\
    		\hline
$47$ & $r\geq 6$ & $n\geq 7$ & $(0,2,0,**,1,1,1,1,0,1,**,1)$ & \textcolor{blue}{Theorem \ref{PorthNSX001}}   \\
    		\hline
$49$ & $r\geq 6$ & $n\geq 7$ & $(1,1,0,**,1,0,0,0,1,1,**,1)$ & \text{Theorem \ref{PorthNSX12}}   \\
    		\hline
    	\end{tabular}
    \end{table}
\end{tiny}

\section{Other Numerical Semigroups}

Our collection $A=\left(a,ba+d,b^2a+\frac{b^2-1}{b-1}d,\ldots,b^ka+\frac{b^k-1}{b-1}d\right)$ in Corollary \ref{HaHeLiuCor} can also be used to explain the following six numerical semigroups. The first five are considered in the literature. The sixth item seems new, and we will give a proof.

1. If $a=(2^m-1)\cdot 2^n-1$, $b=2$, $d=1$, $k=n+m-1$, $n\geq 1$, and $2\leq m\leq 2^n$, then
$$A=((2^m-1)\cdot 2^n-1, (2^m-1)\cdot 2^{n+1}-1,\ldots,(2^{m}-1)\cdot 2^{2n+m-1}-1),$$
and $\langle A\rangle$ is a class of numerical semigroups $S(m,n)$ in \cite{GuZeTang}.

2. Let $m,n\in \mathbb{N}$, $m\geq 2$, and
$$\begin{aligned}
  \delta=
\left\{
    \begin{array}{lc}
    1&\ \text{if}\ \ n=0;\ \ \ \ \ \ \ \ \ \ \ \\
    m&\ \text{if}\ \ n\neq 0, m\leq n;\\
    m-1&\ \text{if}\ \ n\neq 0, m> n.\\
    \end{array}
\right.
\end{aligned}$$
If $a=(2^m+1)\cdot 2^n-(2^m-1)$, $b=2$, $d=2^m-1$, and $k=n+\delta$, then $\langle A\rangle$ is a class of numerical semigroups $GT(n,m)$ in \cite{KyunghwanSong}.

3. If $a=b^{n+1}+\frac{b^n-1}{b-1}$, $b\geq 2$, $d=1$, and $k=n+1$, then
$$A=\left(b^{n+1}+\frac{b^n-1}{b-1}, b^{n+2}+\frac{b^{n+1}-1}{b-1},\ldots,b^{2n+2}+\frac{b^{2n+1}-1}{b-1}\right),$$
and $\langle A\rangle$ is a class of numerical semigroups $S(b,n)$ in \cite{GuZe2020}.

4. If $a=(b+1)b^n-1$, $b\geq 2$, $d=b-1$, and $k=n+1$, then
$$A=((b+1)b^n-1,(b+1)b^{n+1}-1,\ldots,(b+1)b^{2n+1}-1),$$
and $\langle A\rangle$ is the Thabit numerical semigroup $T_{b,1}(n)$ of the first kind base $b$ in \cite{KyunghwanSong2020}.

5. If $a\geq 2$, $d\in \mathbb{P}$, and $k=\min\{m-1\mid b^m-1>(b-1)(a-1)\}$, then $\langle A\rangle$ is the numerical semigroup $G_{b,d}(a)$ in \cite{S.Ugolini}. This semigroup is derived by defining an affine map. Ugolini obtains the Frobenius number $F(G_{b,d}(a))$ and genus $g(G_{b,d}(a))$ by first calculating $e(G_{b,d}(a))$. Therefore, for the study of numerical semigroups $G_{b,d}(a)$, there are restrictions $a\geq 2$ and $k=\min\{m-1\mid b^m-1>(b-1)(a-1)\}$ in \cite{S.Ugolini}.
When studying the problem using Corollary \ref{HaHeLiuCor}, we obtain different restrictions:  $a\geq 2$, $k\geq 2$, and $a\geq k-1-\frac{d-1}{b-1}$.

6. If $b=2$, $a=m(2^k-1)+2^{k-1}-1$, $m\geq 1$, $d\in \mathbb{P}$, and $k\geq 3$, then $a-1=m(2^k-1)+2(2^{k-2}-1)$. We solve this case as follows. We solve the Frobenius problem by providing closed formulas for the Frobenius number and genus.
By Corollary \ref{HaHeLiuCor}, we have
\begin{align*}
F(A)&=\left(m(2^k-1)+2^{k-1}-1+d-2+m+2\right)\cdot (m(2^k-1)+2^{k-1}-1)-d
\\&=\left((2m+1)2^{k-1}-1\right)^2+(d-m)\left((2m+1)2^{k-1}-1\right)-md-d.
\end{align*}
Now we consider the genus $g(A)$. Let
\begin{align*}
R_1&=\Big\{(x_1,x_2,\ldots,x_{k}) \mid 0\leq x_k\leq m-1;\ \ \ 0\leq x_i\leq 2\ \ \text{for}\ \ 1\leq i\leq k-1,
\\&\ \ \ \ \ \ \ \ \ \ \text{if}\ \ x_i=2,\ \ \text{then}\ \  x_j=0\ \  \text{for}\ \ j< i\leq k-1\Big\}
\end{align*}
and
\begin{align*}
R_2&=\Big\{(x_1,x_2,\ldots,x_{k}) \mid x_k= m;\ x_{k-1}=0;\ \ 0\leq x_i\leq 2\ \text{for}\ \ 1\leq i\leq k-2,
\\&\ \ \ \ \ \ \ \ \ \ \text{if}\ \ x_i=2,\ \ \text{then}\ \  x_j=0\ \  \text{for}\ \ j< i\leq k-2\Big\}.
\end{align*}
Readers can easily verify that $R(a-1)=R_1\biguplus R_2$.
Similar to the proof of Theorem \ref{m2nd-repunit}, we have
\begin{align*}
g(A)&=\sum_{r=1}^{a-1}\left(\sum_{i=1}^{k}x_i\right)_r+\frac{(a-1)(a+d-1)}{2}
\\&=\sum_{i=1}^{m-1}i(2^k-1)+m(2^{k-1}-1)+\sum_{r=1}^{a-1}\left(\sum_{i=1}^{k-1}x_i\right)_r
+\frac{(a-1)(a+d-1)}{2}.
\end{align*}
Now, we obtain
\begin{align*}
\sum_{r=1}^{a-1}\left(\sum_{i=1}^{k-1} x_i\right)_r
=&m\left(\sum_{i=0}^{k-1}i\binom{k-1}{i}
+\sum_{j=1}^{k-1}\left(\sum_{i=0}^{k-j-1}(i+2)\binom{k-j-1}{i}\right)\right)
\\&\ \ +\left(\sum_{i=0}^{k-2}i\binom{k-2}{i}
+\sum_{j=1}^{k-2}\left(\sum_{i=0}^{k-j-2}(i+2)\binom{k-j-2}{i}\right)\right)
\\=&m(2^{k-1}k-1)+2^{k-2}(k-1)-1.
\end{align*}
Therefore, we get
\begin{align*}
g(A)=&2^{k-1}(2^k-1)m^2+\frac{1}{2}(d-1)(2^k-1)m+\left(2^{2k-1}+k2^{k-1}-2^{k+1}\right)m
\\&\ \ +2^{2k-3}+(d+k)2^{k-2}-5\cdot 2^{k-2}-d+1.
\end{align*}

The above discussion is summarized as follows.
\begin{thm}
Let $A=(a, 2a+d, 2^2a+3d, \ldots, 2^ka+(2^k-1)d)$, $a=m(2^k-1)+2^{k-1}-1$, $m\geq 1$, $k\geq 3$, $d\in \mathbb{P}$, and $\gcd(a,d)=1$. Then we have
\begin{align*}
F(A)=&\left((2m+1)2^{k-1}-1\right)^2+(d-m)\left((2m+1)2^{k-1}-1\right)-md-d,
\\g(A)=&2^{k-1}(2^k-1)m^2+\frac{1}{2}(d-1)(2^k-1)m+(2^{2k-1}+k2^{k-1}-2^{k+1})m
\\&\ \ +2^{2k-3}+(d+k)2^{k-2}-5\cdot 2^{k-2}-d+1.
\end{align*}
\end{thm}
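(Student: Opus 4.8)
The plan is to obtain both formulas as a direct specialization of Theorem~\ref{a-2ad-22a3d}. First I would verify that the hypothesis $a+d\ge k$ of that theorem holds: since $m\ge 1$ and $k\ge 3$, we have $a=m(2^k-1)+2^{k-1}-1\ge 2^k+2^{k-1}-2>k$, so $a+d\ge k$ for every $d\ge 1$. With the theorem available, everything reduces to two computations in the greedy presentations of $B=(1,3,\dots,2^k-1)$: the single digit-sum $(\sum_{i=1}^k x_i)_{a-1}$ needed for $F(A)$, and the cumulative digit-sum $\sum_{r=1}^{a-1}(\sum_{i=1}^k x_i)_r$ needed for $g(A)$.

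For $F(A)$ I would rewrite $a-1=m(2^k-1)+2^{k-1}-2=m(2^k-1)+2(2^{k-2}-1)$. Running the greedy strategy then gives $x_k=\lfloor (a-1)/(2^k-1)\rfloor=m$, next $x_{k-1}=0$ because the residue $2^{k-1}-2$ is strictly less than $2^{k-1}-1$, and finally $x_{k-2}=2$ with all remaining coordinates zero; hence $(\sum_{i=1}^k x_i)_{a-1}=m+2$. Substituting into Theorem~\ref{a-2ad-22a3d} turns the leading factor $a+d-2+(m+2)$ into $a+d+m$. Writing $P:=(2m+1)2^{k-1}-1$ and using the identity $a=P-m$, the Frobenius number becomes $(P+d)(P-m)-d$, which expands to the claimed closed form.

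For $g(A)$ I would organize the greedy presentations of $0,1,\dots,a-1$ into the two families $R_1$ and $R_2$ defined just above the statement, first confirming $R(a-1)=R_1\cup R_2$ by the cardinality check $\#R_1+\#R_2=m(2^k-1)+(2^{k-1}-1)=a=\#R(a-1)$. Splitting the cumulative digit-sum according to the value of $x_k$ separates it into the top-coordinate contribution $\sum_{i=1}^{m-1}i(2^k-1)+m(2^{k-1}-1)$ and the lower-coordinate contribution $\sum_{r=1}^{a-1}(\sum_{i=1}^{k-1}x_i)_r$. The latter I would evaluate by the same binomial mechanism as in Theorem~\ref{m2nd-Mersen}: grouping the presentations by their highest nonzero position produces sums of the shape $\sum_i i\binom{\ell}{i}+\sum_j\sum_i(i+2)\binom{\ell-j}{i}$, which collapse (using $\sum_i i\binom{\ell}{i}=\ell 2^{\ell-1}$ and $\sum_i\binom{\ell}{i}=2^{\ell}$) to $2^{\ell}(\ell+1)-1$; applying this with $\ell=k-1$ weighted by $m$ (the $R_1$ blocks) and with $\ell=k-2$ once (the $R_2$ block) yields $m(2^{k-1}k-1)+2^{k-2}(k-1)-1$. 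Adding the quadratic term $\frac{(a-1)(a+d-1)}{2}$ from Theorem~\ref{a-2ad-22a3d} and simplifying gives the stated genus.

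The two substitutions into Theorem~\ref{a-2ad-22a3d} are routine; the real work, and the step most prone to slips, is on the genus side: correctly justifying $R(a-1)=R_1\cup R_2$ (equivalently, that the greedy constraint forces $x_{k-1}=0$ exactly when $x_k=m$), evaluating the two binomial sums through the identity $2^{\ell}(\ell+1)-1$, and carrying out the final algebraic simplification into the compact polynomial in $m$, $k$, $d$. Since this computation closely parallels the Mersenne case already settled in Theorem~\ref{m2nd-Mersen}, I expect no conceptual obstacle beyond careful bookkeeping.
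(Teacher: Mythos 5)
Your proposal is correct and follows essentially the same route as the paper: both specialize Theorem \ref{a-2ad-22a3d}, read off $(\sum_i x_i)_{a-1}=m+2$ from the greedy presentation $a-1=m(2^k-1)+2(2^{k-2}-1)$ to get $F(A)=(a+d+m)a-d$, and compute the genus by splitting $R(a-1)=R_1\cup R_2$ and evaluating the same binomial sums to obtain $m(2^{k-1}k-1)+2^{k-2}(k-1)-1$. Your explicit check of the hypothesis $a+d\ge k$ and the cardinality verification $\#R_1+\#R_2=a$ are welcome details that the paper leaves implicit.
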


\section{Concluding Remark}
This paper combines the contents of the previous version and a subsequent draft \cite{LiuXinYeYin}.
In the previous version, we mainly dealt with the case $A=(a, 2a+d, 2^2a+3d, \ldots, 2^ka+(2^k-1)d)$. We soon realized that our idea extends to the case $A=\left(a,ba+d,b^2a+\frac{b^2-1}{b-1}d,\ldots,b^ka+\frac{b^k-1}{b-1}d\right),$ which was discussed in \cite{LiuXinYeYin}.
We extend the idea further to solve the Frobenius problem for $A=(a,Ha+dB)$ in Theorem \ref{a-2ad-22a3d}.

The same idea also applies to cases like $$A=(a,ha+d,h^2a+(h^2-h+1)d,\ldots,h^ka+(h^k-h+1)d),$$
where $h\geq 2$, $\gcd(a,d)=1$, but solving the corresponding Frobenius problem is still hard. We explain as follows.

Firstly, the \emph{One-Point Theorem} also shows that $(1,h^2-h+1,h^3-h+1,\ldots,h^k-h+1)$ is an orderly sequence,
and we have
\begin{align*}
O_{B}^{H}(M)&=\min\left\{\sum_{i=1}^kh^ix_i \mid \sum_{i=1}^k (h^i-h+1)x_i=M, \ x_i\in\mathbb{N}, 1\leq i\leq k\right\}
\\&=\min\left\{M+(h-1)\cdot \sum_{i=1}^kx_i \mid \sum_{i=1}^k (h^i-h+1)x_i=M, \ x_i\in\mathbb{N}, 1\leq i\leq k\right\}
\\&=M+(h-1)\cdot opt_B(M).
\end{align*}

In the solution of $O_B^H(M)$ mentioned above, by $h(h^i-h+1)+(h^2-2h+1)=h^{i+1}-h+1$, we know that $(x_1,x_2,\ldots,x_k)$ satisfies the following conditions.
\begin{enumerate}
  \item $x_k=\left\lfloor \frac{M}{h^k-h+1}\right\rfloor$.

  \item $0\leq x_1\leq h^2-h$ and $0\leq x_i\leq h$ for every $2\leq i\leq k-1$.

  \item if $2\leq i\leq k-1$ and $x_i=h$, then $0\leq x_1\leq h^2-2h$ and $x_2=\cdots =x_{i-1}=0$.
\end{enumerate}
Similarly, we can define the $R(M)$, a colexicographic order on $R(M)$, and the weight $w(r)=\sum_{i=1}^kh^ix_i$ of $r$.
However, the weight $w(r)$ no longer satisfies an increasing property like the one proven in Lemma \ref{colex-incre}. This makes the Frobenius problem quite difficult to solve.

In \cite{B.Branco}, Branco, Colaco, and Ojeda consider the Frobenius number for a class of generalized repunit numerical semigroups. However, our Collection \texttt{GCNS} (i.e., Collection \ref{e-general-general-model}) does not include this case.

\medskip
\noindent
{\small \textbf{Acknowledgements:}
We are grateful to the anonymous referees for useful comments and suggestions.
The authors would also like to express their sincere appreciation for all the suggestions that have improved the presentation of this paper. This work was partially supported by the National Natural Science Foundation of China [12071311].

\end{document}